\newtheorem{theorem}{Theorem}[section]
\newtheorem{corollary}[theorem]{Corollary}
\newtheorem{lemma}[theorem]{Lemma}
\newtheorem{proposition}[theorem]{Proposition}
\newtheorem{remark}[theorem]{Remark}
\newenvironment{proof}[1][Proof]{\noindent\textbf{#1.} }{\ \rule{0.5em}{0.5em}}
\begin{document}

\title{H\"{o}lder Conditions of Local Times and Exact Moduli of
non-differentiability for Spherical Gaussian fields}
\author{Xiaohong Lan \\
School of Mathematical Sciences \\
University of Science and Technology of China \\
E-mail: xhlan@ustc.edu.cn\\
\and Yimin Xiao \\
Department of Statistics and Probability\\
Michigan State University\\
E-mail: xiao@stt.msu.edu}
\maketitle

\begin{abstract}
This paper investigate the local times and modulus of nondifferentiability
of the spherical Gaussian random fields. We extend the methods for studying
the local times of Gaussian to the spherical setting. The new main
ingredient is the property of strong local nondeterminism established
recently in Lan et al (2018).
\end{abstract}

\textsc{Key words}: Spherical Gaussian Fields, Local Times, Modulus of
non-differentiability.

\textsc{2010 Mathematics Subject Classification}: 60G60, 60G17, 60G15, 42C40.

\section{Introduction and statement of main results}


In this paper, we shall be concerned with the local time of an isotropic
Gaussian random field $\mathbf{T}=\left\{ \mathbf{T}\left( x\right) ,\,x\in%
\mathbb{S}^{2}\right\} $ with values in $\mathbb{R}^{d}$ defined on some
probability space $\left( \Omega ,\Im, \mathbb{P}\right)$ by 
\begin{equation}  \label{Def:T}
\mathbf{T}\left( x\right) =\left( T_{1}\left( x\right) ,\ldots
,T_{d}\left(x\right) \right) ,\quad x\in \mathbb{S}^{2},
\end{equation}
where $T_{1},\ldots ,T_{d}$ are independent copies of $T_{0}=\left\{T_{0}%
\left( x\right) ,\text{ }x\in \mathbb{S}^{2}\right\} $, which is a zero-mean
Gaussian random field that satisfies 
\begin{equation}  \label{Eq:T0}
\mathbb{E}\left[ T_{0}\left( x\right) T_{0}\left( y\right) \right] =\mathbb{E%
}\left[ T_{0}\left( gx\right) T_{0}\left( gy\right) \right]
\end{equation}%
for all $g\in SO\left( 3\right) $ and all $x,y\in \mathbb{S}^{2}$. We call $%
T_{0}$ the Gaussian field associated with $\mathbf{T}.$

It follows from \eqref{Eq:T0} that the Gaussian field $T_{0}$ is both
2-weakly and strongly isotropic (cf. \cite{MPbook} p.121). 
It is well-known (cf. \cite{MPbook}, p.123) that $T_{0}$ has the following
spectral representation:%
\begin{equation}  \label{rapT}
T_{0}(x) =\sum_{\ell \geq 0}\sum_{m=-\ell }^{\ell }a_{\ell m}Y_{\ell m}( x) ,
\end{equation}%
where 
\begin{equation*}
a_{\ell m}=\int_{\mathbb{S}^{2}}T_{0}\left( x\right) \overline{Y}_{\ell
m}\left( x\right) d\nu \left( x\right) ,
\end{equation*}%
and $\nu $ is the canonical area measure on the sphere. In the spherical
coordinates $(\vartheta ,\varphi )\in \lbrack 0,\pi ]\times \lbrack 0,2\pi )$%
, $\nu (dx)=\sin \vartheta d\vartheta d\varphi $. Note that the equality in %
\eqref{rapT} holds in both the $L^{2}\left( \mathbb{S}^{2}\times \Omega,d\nu
\left( x\right) \otimes \mathbb{P}\right) $ sense and the $L^{2}\left( 
\mathbb{P}\right) $ sense for every fixed $x\in \mathbb{S}^{2}$. The set of
homogenous polynomials $\left\{ Y_{\ell m}:\ell \geq 0,m=-\ell ,...,\ell
\right\} $ are the spherical harmonic functions on $\mathbb{S}^{2},$
representing an orthonormal basis for the space $L^{2}\left( \mathbb{S}%
^{2},\nu \right) $. In this setting, the random coefficients $\{a_{\ell m},\
\ell \geq 0,m=-\ell ,...,\ell \}$ are Gaussian complex random variables such
that 
\begin{equation*}
\begin{split}
\mathbb{E}\left[ a_{\ell m}\right] & =0\text{;} \\
\mathbb{E}\left[ a_{\ell m}\overline{a}_{\ell _{1}m_{1}}\right] &
=\delta_{\ell }^{\ell _{1}}\delta _{m}^{m_{1}}C_{\ell }\text{,}
\end{split}%
\end{equation*}
where the sequence $\left\{ C_{\ell }\right\} $ of nonnegative numbers is
called the angular power spectrum of $T_{0}$, which fully characterizes the
dependence structures of $T_{0}$. A celebrated theorem of Schoenberg \cite%
{schoenberg1942} provides the following expansion for the covariance
function: 
\begin{equation*}
\mathbb{E}\left[ T_{0}\left( x\right) T_{0}\left( y\right) \right]
=\sum_{\ell =0}^{+\infty }C_{\ell }\frac{2\ell +1}{4\pi }P_{\ell
}\left(\left\langle x,y\right\rangle \right) ,
\end{equation*}
where for $\ell =0,1,\,2,...,$ $P_{\ell }:[-1,1]\rightarrow \mathbb{R}$
denotes the Legendre polynomial satisfying the normalization condition $%
P_{\ell }(1)=1$. Thus for every $x\in \mathbb{S}^{2},$ 
\begin{equation*}
\mathbb{E}\left[ T_{0}\left( x\right) ^{2}\right] =\sum_{\ell
=0}^{+\infty}C_{\ell }\frac{2\ell +1}{4\pi }:=K.
\end{equation*}

For simplicity, we assume in this paper that $K=1.$ Otherwise, we consider
the random field $\{K^{-1/2}T_{0}\left( x\right) ,\text{ }x\in \mathbb{S}%
^{2}\}$, which does not cause any essential loss of generality.

As shown by Lang and Schwab \cite{lang1}, Lan et al. \cite{LanMarXiao} that
the degree of smoothness of the sample paths of $T_{0}$ can be precisely
characterized by the asymptotic behavior of the angular power spectrum $%
\{C_{\ell }\}$ at high multipoles $\ell >>0.$ In this paper, we further
illustrate this point by investigating regularity properties of the local
times of the vector-valued random field $\mathbf{T}$ in \eqref{Def:T}. 
For this purpose, we recall the following condition from Lan et al. \cite%
{LanMarXiao}:

\textbf{Condition (A)} The random field $T_{0}$ is a zero-mean, Gaussian and
isotropic random field indexed by $\mathbb{S}^{2}$, with unit variance and
angular power spectrum satisfying: 
\begin{equation*}
C_{\ell }=C_{\ell }(G,\alpha )=G\left( \ell \right) \ell ^{-\alpha }>0,
\end{equation*}
where $\alpha >2$ and there exists a constant $K_{0}\geq 1$ such that for
all $\ell =1,2,...$ 
\begin{equation*}
K_{0}^{-1}\leq G\left( \ell \right) \leq K_{0}.
\end{equation*}

The regularity of the trajectories of the Gaussian field $T_{0}$ is governed
by $\alpha $. It has been shown in \cite{lang2,LanMarXiao,lang1} that the
sample function of $T_{0}$ is almost surely $k$-times continuous
differentiable if and only if $\alpha >2+2k$. Hence, the field $T_{0}$ is
twice differentiable almost surely (as needed for the Kac-Rice argument in 
\cite{AdlerTaylor}, Theorem 12.1.1) if and only if $\alpha >6$.

In this paper we focus on the non-smooth regime $2<\alpha <4$. In this case,
the exact uniform and local moduli of continuity have recently been proved
by Lan et al. \cite{LanMarXiao}. We remark that the regime of $2<\alpha <4$
is indeed the most relevant for many areas of applications; in particular, a
major driving force for the analysis of spherical random fields has been
provided over the last decade by cosmological applications, for instance in
connection to the analysis of Cosmic Microwave Background radiation data
(CMB). Data analysis on CMB maps is currently an active research area, and
the geometry of CMB maps has been deeply investigated (see \cite%
{planck2013a, planck2013c, planck2015c}). In this framework there are strong
theoretical and empirical evidence to support the belief that the values of $%
\alpha$ belong to the non-smooth region $2<\alpha <4$.

Our objective of the present paper is to establish the joint continuity, and
the uniform and local moduli of continuity for the local times of the
vector-valued random field $\mathbf{T}$. Based on these results, we show
that the sample functions of $\mathbf{T}$ and $T_{0}$ are a.s. nowhere
differentiable, and we determine the exact modulus of non-differentiability
of $\mathbf{T}$.

In order to state our main results, we need some notations. In spherical
coordinates $(\vartheta ,\varphi )\in \lbrack 0,\pi ]\times \lbrack 0,2\pi )$%
, every point $x\in {\mathbb{S}}^{2}$ can be written as $x=(\sin
\vartheta\cos \varphi ,\sin \vartheta \sin \varphi ,\cos \vartheta )$. In
this paper, we always identify the Cartesian and spherical coordinates of $%
x\in {\mathbb{S}}^{2}$. For any $x\in \mathbb{S}^{2},$ $0<r<\pi ,$ let $%
D(x,r)=\left\{y\in \mathbb{S}^{2}:d_{\mathbb{S}^{2}}(x,y)<r\right\} $ be an
open disk on $\mathbb{S}^{2},$ where $d_{\mathbb{S}^{2}}(x,y)=\arccos
\left\langle x,y\right\rangle $ denotes the standard geodesic distance on
the sphere, and $\left\langle \cdot ,\cdot \right\rangle $ is the inner
product in $\mathbb{R}^{3}$. Given $(\vartheta ,\varphi )\in \lbrack 0,\pi
]\times \lbrack 0,2\pi)$, denote by $V(\vartheta ,\varphi )=\{(\theta ,\phi
):0\leq \theta \leq\vartheta ,0\leq \phi \leq \varphi \}$ the angular
section originated from the North Pole. It is easy to see that,
respectively, the spherical area of $D(x,r)$ and $V(\vartheta ,\varphi )$ are

\begin{equation}
\nu (D(x,r))=2\pi (1-\cos r)\ \ \ \text{ and }\ \ \ \nu (V(\vartheta
,\varphi ))=\varphi (1-\cos \vartheta ).  \label{Areas}
\end{equation}%
Our first theorem is concerned with the joint continuity of local times of
the $\mathbf{T}$, see Section \ref{Sec:Joint continuity} below for the
definition of local times and more information.

\begin{theorem}
\label{Th1} Let $\mathbf{T}=\left\{ \mathbf{T}(x),x\in \mathbb{S}%
^{2}\right\} $ be a Gaussian random field with values in $\mathbb{R}^{d}$
defined in $\left( \ref{Def:T}\right) $. Assume that the associated
isotropic random field $T_{0}$ satisfies Condition (A) with $2<\alpha <4$
and $\beta =4-\left( \alpha -2\right) d>0$. Then for any open set $%
D\subseteq \mathbb{S}^{2}\ $with $\nu (D)>0$, $\mathbf{T}$ has local times $%
L\left( \mathbf{t},D\right) $ which is in $\mathbb{R}^{d}\times \mathbb{S}%
^{2}$ almost surely. Moreover, there is a modification of $L\left( \mathbf{t}%
,D\right) $ such that it is jointly continuous in the following sense:

\begin{itemize}
\item[(i)] The local time $L\left( \mathbf{t},D(x,r)\right) $ is continuous
in $(\mathbf{t},x,r)$. Moreover, for any two open disks $D_{i}=D%
\left(x_{i},r_{i}\right) \subseteq \mathbb{S}^{2}$ with $x_{i}\in \mathbb{S}%
^{2},r_{i}\in \left( 0,\delta \right) ,\ i=1,2,$ and any $\gamma \in
\left(0,\gamma _{0}\right) $ with $\gamma _{0}=\min \left\{ \frac{\beta }{%
2\left(\alpha -2\right) },1\right\} ,$ it satisfies

\begin{equation*}
\left\vert L\left( \mathbf{t},D_{1}\right) -L\left( \mathbf{s}%
,D_{2}\right)\right\vert \leq K_{1}\left[ \left\Vert \mathbf{t}-\mathbf{s}%
\right\Vert^{\gamma }+d_{\mathbb{S}^{2}}\left( x,y\right) ^{\beta
/4}r^{\gamma \left(\alpha -2\right) -\beta /4}\right] r^{\eta }\quad a.s.
\end{equation*}
where $\left\Vert \mathbf{\cdot }\right\Vert $ is the Euclidean distance on $%
\mathbb{R}^{d}$ and 
\begin{equation}  \label{eta}
\eta =\frac{\beta }{2}-\left( \alpha -2\right) \gamma >0;
\end{equation}

\item[(ii)] The local time $L\left( \mathbf{t},V(\vartheta ,\varphi )\right)$
is continuous in $(\mathbf{t},\vartheta ,\varphi )$. Moreover, for any two
angular sections $V_{i}=V(\vartheta _{i},\varphi _{i})\subseteq \mathbb{S}%
^{2}$ with $(\vartheta _{i},\varphi _{i})\in \lbrack 0,\pi ]\times \lbrack
0,2\pi ),\ i=1,2,$ and any $\gamma \in \left( 0,\gamma _{0}\right),$ it
satisfies 
\begin{eqnarray*}
\lefteqn{\left\vert L\left( \mathbf{t},V_{1}\right) -L\left( \mathbf{s}%
,V_{2}\right) \right\vert} \\
& \leq& K_{2}\left[ \left\vert \varphi _{1}-\varphi _{2}\right\vert \min
\left\{\vartheta _{1}^{2},\vartheta _{2}^{2}\right\} +\min \left\{
\varphi_{1},\varphi _{2}\right\} \left\vert \vartheta
_{1}^{2}-\vartheta_{2}^{2}\right\vert \right]^{\beta /4} \\
&&+K_{2}\left\Vert \mathbf{t}-\mathbf{s}\right\Vert ^{\gamma}\varphi
_{1}^{\eta /2}\vartheta _{1}^{\eta }, \quad a.s.
\end{eqnarray*}
where the constant $\delta \in \left( 0,1\right) $ depending only on $%
\alpha\ K_{0},$ and $K_{1},K_{2}$ are positive depending only on $\alpha ,\
d $ $K_{0}$ and $\gamma .$
\end{itemize}
\end{theorem}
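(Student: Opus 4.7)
The plan is to adapt the Berman--Geman--Horowitz framework for joint continuity of local times of Gaussian fields to the spherical setting, using the strong local nondeterminism (SLND) of $T_0$ established in Lan et al.~(2018) as the substitute for translation invariance. I would first verify a.s.\ existence of $L(\mathbf{t},D)$ as an $L^2(\mathbb{R}^d)$ function in $\mathbf{t}$ via the Fourier-inversion formula
$$L(\mathbf{t},D) = (2\pi)^{-d}\int_{\mathbb{R}^d} e^{-i\langle \xi,\mathbf{t}\rangle}\int_D e^{i\langle \xi,\mathbf{T}(x)\rangle}\, d\nu(x)\, d\xi,$$
which by Parseval reduces to the classical criterion
$$\int_D\!\!\int_D \big[\det\mathrm{Cov}(T_0(x),T_0(y))\big]^{-d/2}\, d\nu(x)\, d\nu(y)<\infty.$$
Under Condition (A) with $2<\alpha<4$, the two-point estimate from \cite{LanMarXiao} gives $\mathbb{E}[(T_0(x)-T_0(y))^2]\asymp d_{\mathbb{S}^2}(x,y)^{\alpha-2}$, so the integrand behaves like $d_{\mathbb{S}^2}(x,y)^{-d(\alpha-2)/2}$ and is integrable precisely when $\beta=4-d(\alpha-2)>0$.

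The core step is to establish sharp $n$th-moment bounds for increments of $L$ in each argument. Starting from the identity (for even $n$)
$$\mathbb{E}\big[(L(\mathbf{t},D)-L(\mathbf{s},D))^n\big] = (2\pi)^{-nd}\int_{\mathbb{R}^{nd}}\!\!\int_{D^n}\prod_{j=1}^{n}\big(e^{-i\langle \xi_j,\mathbf{t}\rangle}-e^{-i\langle \xi_j,\mathbf{s}\rangle}\big)\,\mathbb{E}\Big[e^{i\sum_j\langle \xi_j,\mathbf{T}(x_j)\rangle}\Big]\, d\bar{x}\, d\bar{\xi},$$
(with $D$ replaced by $D_1\triangle D_2$ for the spatial increments), one applies the inequality $|e^{iu}-e^{iv}|\le 2^{1-\gamma}|u-v|^\gamma$ with $0<\gamma<\gamma_0$ to pull out $\|\mathbf{t}-\mathbf{s}\|^{n\gamma}$ at the price of factors $|\xi_j|^\gamma$, performs the Gaussian integration in $\xi_1,\ldots,\xi_n$, and invokes SLND in the form
$$\mathrm{Var}\big(T_0(x_k)\mid T_0(x_j),\, j<k\big)\ \ge\ c\,\rho_k^{\alpha-2},\qquad \rho_k:=\min_{j<k} d_{\mathbb{S}^2}(x_k,x_j),$$
after an optimal re-ordering of the $x_k$. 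This reduces everything to iterated spherical integrals of the form $\int_{D^n}\prod_k \rho_k^{-\theta}\, d\bar{\nu}(x)$ for an explicit $\theta=\theta(\alpha,d,\gamma)$, which are handled by induction on $n$ via the area formulas \eqref{Areas}, producing a bound of the shape $K^n(n!)^{\gamma}\|\mathbf{t}-\mathbf{s}\|^{n\gamma}r^{n\eta}$ with $\eta$ as in \eqref{eta}. A multiparameter Kolmogorov continuity criterion then upgrades these moment bounds to joint continuity and to the quantitative Hölder estimates of the theorem.

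For part (i), the symmetric difference of two small disks satisfies $\nu(D_1\triangle D_2)\lesssim d_{\mathbb{S}^2}(x_1,x_2)\,r+|r_1^2-r_2^2|$, and running the moment estimate with exponent $\beta/4$ on the set increment produces the $d_{\mathbb{S}^2}(x,y)^{\beta/4}r^{\gamma(\alpha-2)-\beta/4}$ factor. For part (ii), the analogous spherical identity $\nu(V_1\triangle V_2)\lesssim |\varphi_1-\varphi_2|(1-\cos\min\vartheta_i)+\min\varphi_i\,|\cos\vartheta_1-\cos\vartheta_2|$, combined with $1-\cos\vartheta\sim \vartheta^2/2$, delivers the geometric factor in the theorem. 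The main obstacle is the bookkeeping in the moment estimates: unlike the stationary Euclidean case, the permutation argument underlying SLND must be executed so as to simultaneously yield the sharp Hölder exponent $\gamma<\gamma_0$ in $\mathbf{t}$ and the correct spatial scaling in $r$ (respectively in $\vartheta,\varphi$), with constants uniform over the admissible parameter range. Part (ii) is additionally delicate because $V(\vartheta,\varphi)$ degenerates as $\vartheta\to 0$ or $\varphi\to 0$, so $\vartheta$ and $\varphi$ must be tracked as independent scales throughout the induction.
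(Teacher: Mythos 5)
Your proposal follows essentially the same route as the paper: $L^2$-existence via Fourier inversion/Plancherel and the two-point variogram estimate, $n$th-moment bounds obtained by Gaussian integration combined with the SLND conditional-variance lower bound and an induction on iterated spherical integrals (with exactly the permutation re-ordering of the $x_k$ the paper uses), then a multiparameter Kolmogorov criterion to upgrade to a.s.\ Hölder continuity, and finally the symmetric-difference area estimates $\nu(D_1\triangle D_2)$ and $\nu(V_1\triangle V_2)$ to convert set increments into the geometric factors in (i) and (ii). The only cosmetic differences are that you write the Parseval criterion with $\det\mathrm{Cov}(T_0(x),T_0(y))$ rather than $\mathbb{E}[(T_0(x)-T_0(y))^2]$ (equivalent up to constants by SLND), and your heuristic $(n!)^\gamma$ in the moment bound should read $(n!)^{2-\eta/2}$ as in the paper's Lemma 3.2, though the precise exponent is immaterial for the fixed-$n$ Kolmogorov argument.
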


The next result provides optimal upper and lower bounds for the exact moduli
of continuity for the maximum of local time $L^{\ast }(D)=\sup_{\mathbf{t}%
\in \mathbb{R}^{d}}L\left( \mathbf{t},D\right) $ \textit{w.r.t} to the
variable $r$ in the set $D=D(x,r)$.

\begin{theorem}
\label{Th2} Under conditions of Theorem \ref{Th1}, there exist positive
constant $K_{3},K_{4}$ depending only on $\alpha ,\ d$ and $K_{0},$ such
that for any $x\in \mathbb{S}^{2},$ 
\begin{equation}
K_{3}^{-1}\leq \underset{r\rightarrow 0}{\lim \inf }\frac{L^{\ast }\left(
D\left( x,r\right) \right) }{\phi \left( r\right) }\leq \underset{%
r\rightarrow 0}{\lim \sup }\frac{L^{\ast }\left( D\left( x,r\right) \right) 
}{\phi \left( r\right) }\leq K_{3},\ a.s.  \label{ineq:Local Holder of LT}
\end{equation}%
where the functions $\phi $ is defined by 
\begin{equation}
\phi \left( r\right) =\frac{r^{2}}{\left[ \rho _{\alpha }(r/\sqrt{\log
\left\vert \log r\right\vert })\right] ^{d}},  \label{psi1}
\end{equation}%
with $\rho _{\alpha }(r)=r^{\frac{\alpha }{2}-1}$ for $r\geq 0$.
\end{theorem}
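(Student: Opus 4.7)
Both bounds are established by Borel--Cantelli along a suitable sequence $r_{k}\downarrow 0$, using essentially the strong local nondeterminism (SLND) of $T_{0}$ in the canonical metric $\rho_{\alpha}(d_{\mathbb{S}^{2}}(\cdot,\cdot))$ from Lan et al.\ (2018). Writing out $\phi$ we have $\phi(r)\asymp r^{\beta/2}(\log\log(1/r))^{d(\alpha-2)/4}$, so the upper bound is a Khintchine-type modulus and the lower bound is a Chung-type modulus.

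\textbf{Upper bound.} The first step is a high-moment estimate
\begin{equation*}
\mathbb{E}\bigl[L(\mathbf{t},D(x,r))^{n}\bigr] \;\le\; C^{n}(n!)^{\lambda}\, r^{n\beta /2}
\end{equation*}
for some $\lambda>0$ depending on $(\alpha,d)$, uniformly in $\mathbf{t}\in\mathbb{R}^{d}$ and $n\ge 1$. This follows from the identity $\mathbb{E}[L(\mathbf{t},D)^{n}]=\int_{D^{n}}p_{\mathbf{T}(y_{1}),\dots,\mathbf{T}(y_{n})}(\mathbf{t},\dots,\mathbf{t})\,d\nu^{\otimes n}$, the Gaussian density bound $p\le\prod_{i}(c\rho_{\alpha}(\sigma_{i}))^{-d}$ in terms of successive conditional standard deviations $\sigma_{i}$, and the SLND estimate $\sigma_{i}\ge c\rho_{\alpha}(\min_{j<i}d_{\mathbb{S}^{2}}(y_{i},y_{j}))$. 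Combining this with the H\"older control in $\mathbf{t}$ from Theorem~\ref{Th1}(i) --- which replaces $L^{\ast}(D(x,r))$ by a maximum of $L(\mathbf{t},D(x,r))$ over a net of size $O(\rho_{\alpha}(r)\sqrt{\log\log(1/r)})^{d}$ in $\mathbb{R}^{d}$ --- Markov's inequality at moment $n\asymp\log\log(1/r)$ yields $\mathbb{P}(L^{\ast}(D(x,r))>K_{3}\phi(r))\le(\log(1/r))^{-2}$. Borel--Cantelli along $r_{k}=2^{-k}$, together with the monotonicity $L^{\ast}(D(x,r))\le L^{\ast}(D(x,r_{k}))$ for $r\in[r_{k+1},r_{k}]$, gives the $\limsup$ statement in (\ref{ineq:Local Holder of LT}).

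\textbf{Lower bound.} The starting point is the deterministic inequality
\begin{equation*}
L^{\ast}(D(x,r))\;\ge\;\frac{\nu(D(x,r))}{\mathrm{Leb}_{d}(\mathbf{T}(D(x,r)))} \;\ge\;\frac{c\,r^{2}}{(\mathrm{osc}_{D(x,r)}\mathbf{T})^{d}}.
\end{equation*}
Setting $\varepsilon_{k}:=\rho_{\alpha}(r_{k}/\sqrt{\log\log(1/r_{k})})$, on the event $A_{k}:=\{\mathrm{osc}_{D(x,r_{k})}\mathbf{T}\le K\varepsilon_{k}\}$ one has $L^{\ast}(D(x,r_{k}))\ge K_{3}^{-1}\phi(r_{k})$. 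A Gaussian small-ball estimate driven by the $\rho_{\alpha}$-metric entropy count $N(D(x,r_{k}),\rho_{\alpha};\varepsilon_{k})\asymp\log\log(1/r_{k})$ (a consequence of SLND) gives $\mathbb{P}(A_{k})\ge(\log(1/r_{k}))^{-c}$. Choosing $r_{k}$ to decrease fast enough that $\sum_{k}\mathbb{P}(A_{k})=\infty$ and exploiting SLND once more to show that, conditionally on $\mathbf{T}$ restricted to $D(x,r_{k+1})$, the restriction of $\mathbf{T}$ to the annulus $D(x,r_{k})\setminus D(x,r_{k+1})$ is asymptotically unconstrained, a second Borel--Cantelli argument forces $A_{k}$ to occur infinitely often a.s., yielding the $\liminf$ statement in (\ref{ineq:Local Holder of LT}).

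\textbf{Main obstacle.} The hardest step is the lower bound: extracting the small-ball probability with a \emph{sharp} exponent $c=c(\alpha,d)$ so that the $\log\log$-powers on the two sides of (\ref{ineq:Local Holder of LT}) match, and simultaneously quantifying the asymptotic independence of the nested events $A_{k}$ in the presence of the long-range covariance of a spherical isotropic field. Both depend crucially on the SLND from Lan et al.\ (2018), which appears to be the only tool strong enough to decouple contributions from nested scales on $\mathbb{S}^{2}$.
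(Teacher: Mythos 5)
Your overall architecture matches the paper's: the upper bound comes from high moments of the local time plus a chaining argument \`a la Xiao (1997) with Borel--Cantelli along a geometric sequence, and the lower bound comes from the deterministic inequality $\nu(D)=\int L(\mathbf{t},D)\,d\mathbf{t}\le L^{\ast}(D)\cdot c_d\,(\mathrm{osc}_{D}\mathbf{T})^{d}$, which reduces matters to proving a Chung-type law of the iterated logarithm for the oscillation of $\mathbf{T}$. Two points deserve comment.

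On the upper bound, what you describe is correct in spirit, but the paper's chaining step is run not for $L(\mathbf{t},D)$ at a fixed $\mathbf{t}$ but for the randomly recentered local time $L(\mathbf{t}+\mathbf{T}(x_0),D(x_0,r))$ (Lemmas \ref{Lemma:EL+r.v.} and \ref{Lemma:PrLT+r.v.}), which requires the conditional SLND of $Z_0(\cdot)=T_0(\cdot)-T_0(x_0)$ (Corollary \ref{C2'}). Since the range $\mathbf{T}(D(x_0,r))$ sits in a random ball centered at $\mathbf{T}(x_0)$, this recentering is what lets one combine the net over $\mathbf{t}$ with the oscillation estimate; your sketch glosses over this.

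The genuine gap is in the lower bound. You propose to obtain the second Borel--Cantelli by ``exploiting SLND to show that conditionally on $\mathbf{T}$ on $D(x,r_{k+1})$, the restriction to the annulus is asymptotically unconstrained.'' This is not how the paper proceeds, and SLND as stated does not deliver such a scale-decoupling: it bounds conditional variances at a single point given finitely many others, not the conditional law of the oscillation over one disk given the field on a nested disk, and the fields over nested disks of an isotropic spherical field have long-range dependence that SLND alone will not neutralize. The paper's key idea, which your sketch misses, is the spherical-harmonic band-limiting: write $T_0^{L,U}=\sum_{\ell=L}^{U}\sum_m a_{\ell m}Y_{\ell m}$ and note that $T_0^{L_k,U_k}$ and $T_0^{L_{k'},U_{k'}}$ are \emph{exactly independent} for disjoint frequency bands (orthogonality of the $a_{\ell m}$). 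Choosing $L_k,U_k$ matched to the scale $r_k$, the small-ball lower bound $\mathbb{P}\{\mathrm{osc}\,T_0^{L_k,U_k}\le C\varepsilon_k\}\ge k^{-1/2}$ (Lemma \ref{Lem:Small Ball P}) and the independence across $k$ give the second Borel--Cantelli with no conditioning argument at all; the tail $T_0^{\Delta_k}=T_0-T_0^{L_k,U_k}$ is then controlled by the separate sharper estimate of Lemma \ref{LemTtail}. Without this band-limited decomposition your lower-bound argument has a hole at exactly the step you flag as the main obstacle.
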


\begin{remark}
The result above further comfirms that the sample functions of $T_{0}$ (and $%
\mathbf{T)}$ are nowhere differentiable. Actually, in Section \ref%
{Sec:non-diff},\ we establish the Chung's law of iterated logarithm, which
is essential to the proof of Theorem \ref{Th2} as well.
\end{remark}

The rest of this paper is as follows\emph{: } Section\emph{\ }\ref%
{preliminaries}\emph{\ }reviews some auxiliary tools for the arguments to
follow. Section \ref{Sec:Joint continuity} and \ref{Sec:Holder condition}
present the proofs of Theorems 1.1 and 1.2., respectively. The Chung's law
of iterated logarithm of $T_{0}$ is established in Section \ref{Sec:non-diff}%
. Our method for establishing joint continuity and upper bound of H\"{o}lder
conditions for the local times relies on moment estimates on the local
times, where the property of strong local nondeterminsim of $T_{0}$ proved
in Lan et al. \cite{LanMarXiao}. plays an essential role. We also make use
of a chaining argument that is similar to those in \cite{GeHor, Xiao97}. 

Throughout this paper, we use $C,K$ to denote a constant whose value may
change in each appearance, and $C_{i,j},K_{i,j}$ to denote the $j$th more
specific positive finite constant in Section $i.$ 

\medskip

\textbf{Acknowledgement} The authors thank Professor Domenico Marinucci for
stimulating discussions and helpful comments on this paper. Research of X.
Lan is supported by NSFC grants 11501538 and WK0010450002. Research of Y.
Xiao is partially supported by NSF grants DMS-1612885 and DMS-1607089. 

\section{Preliminaries}

\label{preliminaries}

In this section, we collect a few technical results which will be
instrumental for most of the proofs to follow. We recall first the following
lemma from \cite{LanMarXiao}, which characterizes the variogram and the
property of strong local nondeterminism of $T_{0}$.

\begin{lemma}
\label{C1} Under Condition (A) with $2<\alpha <4$, there exist positive
constants $K_{2,1}\geq 1,$ $0<\delta <1$ depending only on $\alpha $ and $%
K_{0},$ such that for any $x,y\in \mathbb{S}^{2},$ if $d_{\mathbb{S}^{2}}(
x,y) <\delta ,$ we have 
\begin{equation}  \label{ineq:variogram}
K_{2,1}^{-1}\rho _{\alpha }^{2}\left( d_{\mathbb{S}^{2}}( x,y)\right) \leq 
\mathbb{E}\left[ (T_{0}\left( x\right) -T_{0}( y))^{2}\right] \leq
K_{2,1}\rho _{\alpha }^{2}\left( d_{\mathbb{S}^{2}}(x,y) \right) .
\end{equation}
Moreover, there exists a constant $K_{2,2}>0$ depending on $\alpha $ and $%
K_{0}$ only, such that for all integers $n\geq 1$ and all $%
x,x_{1},...,x_{n}\in \mathbb{S}^{2},$ we have 
\begin{equation}  \label{ineq:SLND}
\mathrm{Var}\left( T_{0}\left( x\right) |T_{0}\left(
x_{1}\right),...,T_{0}\left( x_{n}\right) \right) \geq K_{2,2}\min_{1\leq
k\leq n}\rho_{\alpha }^{2}\left( d_{\mathbb{S}^{2}}\left( x,x_{k}\right)
\right).
\end{equation}
\end{lemma}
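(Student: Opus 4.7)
The plan is to prove the two assertions separately, starting with the variogram estimate \eqref{ineq:variogram} and then using its spectral picture to attack the strong local nondeterminism \eqref{ineq:SLND}.

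For the variogram, Schoenberg's expansion gives, with $\theta = d_{\mathbb{S}^2}(x,y)$,
\[
\mathbb{E}\bigl[(T_0(x)-T_0(y))^2\bigr] = 2\sum_{\ell \geq 0} C_\ell \frac{2\ell+1}{4\pi}\bigl(1-P_\ell(\cos\theta)\bigr).
\]
Under Condition (A) one has $C_\ell \asymp \ell^{-\alpha}$, and I would use the classical two-sided estimate $1-P_\ell(\cos\theta) \asymp \min\{1,\ell^2\theta^2\}$. Splitting the sum at $L \asymp 1/\theta$, the low-frequency part is of order $\theta^2\sum_{\ell\leq L}\ell^{3-\alpha}\asymp \theta^2 L^{4-\alpha}\asymp \theta^{\alpha-2}$ (using $\alpha<4$), and the high-frequency tail is $\sum_{\ell>L}\ell^{1-\alpha}\asymp L^{2-\alpha}\asymp\theta^{\alpha-2}$ (using $\alpha>2$); restricting the same estimates to the band $\ell\theta \asymp 1$ yields the matching lower bound. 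This gives $\mathbb{E}[(T_0(x)-T_0(y))^2] \asymp \theta^{\alpha-2}=\rho_\alpha^2(\theta)$ once $\theta<\delta$ is small enough for the asymptotic regimes to dominate.

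For the strong local nondeterminism, set $r = \min_{1\leq k \leq n} d_{\mathbb{S}^2}(x,x_k)$ and write
\[
V := \mathrm{Var}\bigl(T_0(x)\mid T_0(x_1),\ldots,T_0(x_n)\bigr) = \inf_{c_1,\ldots,c_n}\mathbb{E}\Bigl[\bigl(T_0(x)-\sum_k c_k T_0(x_k)\bigr)^2\Bigr].
\]
Identifying the Gaussian closure with the reproducing kernel Hilbert space $H$ of $T_0$, one has $V = \|R(x,\cdot)-\sum_k c_k R(x_k,\cdot)\|_H^2$, where $R$ is the covariance. For any $g \in H$ with $g(x_k)=0$ for every $k$, Cauchy--Schwarz gives
\[
V \geq \frac{|g(x)|^2}{\|g\|_H^2}, \qquad \|g\|_H^2 = \sum_{\ell,m}\frac{|\widehat g_{\ell m}|^2}{C_\ell}.
\]
I would then take $g$ band-limited with spectral mass concentrated on $\ell \asymp L := \lfloor 1/r\rfloor$ and physically localized in the disk $D(x,r/2)$: either a normalized spherical needlet centered at $x$, or the spectral truncation of a smooth bump on $D(x,r/2)$ followed by a finite-dimensional correction removing its residual values at $x_1,\ldots,x_n$. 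For such $g$ one has $|g(x)|^2/\|g\|_{L^2}^2\asymp L^2$ and $\|g\|_H^2 / \|g\|_{L^2}^2\asymp L^\alpha$, so $V \geq K L^{2-\alpha} \asymp r^{\alpha-2}$, which is the asserted bound.

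The main obstacle lies in this last construction: band-limited functions on $\mathbb{S}^2$ cannot have compact support, so a pure needlet at scale $L$ vanishes at the $x_k$ only approximately, with error controlled by the polynomial off-diagonal decay of the needlet kernel in $Ld_{\mathbb{S}^2}(x,x_k) \geq Lr \asymp 1$. Making this rigorous uniformly in $n$ requires a Gram-matrix inversion argument imposing the constraints $g(x_k)=0$, whose invertibility rests on showing that the associated kernel on the $n$-point set is diagonally dominant at scale $L\asymp 1/r$; this is where the sharp spatial localization of needlets for the Laplace--Beltrami operator on $\mathbb{S}^2$ enters in an essential way, and is the delicate input that the argument in \cite{LanMarXiao} supplies.
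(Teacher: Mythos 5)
This lemma is stated in the paper without proof; it is cited verbatim from Lan, Marinucci and Xiao \cite{LanMarXiao}, so there is no in-paper argument to compare against. Evaluating your proposal on its own merits:

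Your variogram argument is sound. The two-sided estimate $1-P_{\ell}(\cos\theta)\asymp\min\{1,\ell^{2}\theta^{2}\}$ for $\theta$ small follows from the Hilb asymptotic (which the paper records in the Appendix as Lemma \ref{C1'}) together with the elementary fact that $1-J_{0}(u)\asymp\min\{1,u^{2}\}$ on $[0,\infty)$; the multipole-splitting at $L\asymp 1/\theta$ then delivers both bounds, and restricting to $\ell\theta\in[1/2,1]$ gives the matching lower bound because $J_{0}$ is uniformly bounded away from $1$ there.

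For the SLND your reproducing-kernel duality $V\geq |g(x)|^{2}/\|g\|_{H}^{2}$ is the right tool, but you have misidentified the obstacle, and the extra machinery you propose is unnecessary. You do not want a band-limited test function; you want one with compact \emph{spatial} support. Set $r=\min_{k}d_{\mathbb{S}^{2}}(x,x_{k})$ and take $g$ a fixed $C^{\infty}$ bump of height one supported in $D(x,r)$, rescaled from a reference profile. Then $g(x_{k})=0$ for every $k$ exactly and uniformly in $n$, with no needlets, no spectral truncation and no Gram-matrix inversion. The RKHS norm is still tractable: since $g$ is $C^{\infty}$, integration by parts against the Laplace--Beltrami operator shows $\sum_{m}|\widehat g_{\ell m}|^{2}$ decays faster than any polynomial once $\ell\gtrsim 1/r$, and a scaling computation (the exact analogue of your needlet estimate) gives $\|g\|_{H}^{2}\asymp\sum_{\ell}\ell^{\alpha}\sum_{m}|\widehat g_{\ell m}|^{2}\asymp r^{2-\alpha}$, while $|g(x)|^{2}=1$, so $V\gtrsim r^{\alpha-2}=\rho_{\alpha}^{2}(r)$. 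The diagonal-dominance/Gram argument you sketch would be needed only if you insisted on a band-limited $g$, and it introduces an $n$-dependence that you would then have to fight to remove; the compactly supported bump sidesteps the issue entirely. The only remaining loose end is the regime of large $r$, where you simply take the bump at scale $\min(r,\delta)$ and absorb the discrepancy into the constant $K_{2,2}$.
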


For any fixed point $x_{0}\in \mathbb{S}^{2},$ consider the spherical random
field $Z_{0}(x)=T_{0}( x) -T_{0}( x_{0}) ,\ x\in \mathbb{S}^{2}.$ By taking
the same argument as in the proof of Theorem 1 in \cite{LanMarXiao}, we
obtain the following consequence of $\left( \ref{ineq:SLND}\right) $ in
Lemma \ref{C1}.

\begin{corollary}
\label{C2'} Under Condition (A) with $2<\alpha <4$, there exists a constant $%
K_{2,2}^{\prime }>0,$ such that for all integers $n\geq 1$ and all $%
x,x_{1},...,x_{n}\in \mathbb{S}^{2},$

\begin{equation*}
\mathrm{Var}\left( Z_{0}\left( x\right) |Z_{0}\left(
x_{1}\right),...,Z_{0}\left( x_{n}\right) \right) \geq K_{2,2}^{\prime
}\min_{0\leq k\leq n}\left\{ \rho _{\alpha }^{2}\left( d_{\mathbb{S}%
^{2}}(x,x_{k}) \right) \right\} .
\end{equation*}
\end{corollary}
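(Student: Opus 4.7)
My strategy is to reduce the corollary to the strong local nondeterminism inequality \eqref{ineq:SLND} of Lemma \ref{C1} via a short monotonicity-of-conditional-variance argument, rather than retracing the spectral proof from \cite{LanMarXiao}. The key observation is that augmenting the conditioning $\sigma$-algebra by $T_{0}(x_{0})$ converts conditioning on the increments $Z_{0}(x_{k}) = T_{0}(x_{k}) - T_{0}(x_{0})$ into conditioning on the values $T_{0}(x_{k})$ themselves, which puts us back in the setting of Lemma~\ref{C1} applied to the $n+1$ points $x_{0}, x_{1}, \ldots, x_{n}$.

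First I would use the fact that conditioning on a larger $\sigma$-algebra can only decrease the conditional variance of a Gaussian random variable, so
\[
\mathrm{Var}\bigl(Z_{0}(x)\,\big|\,Z_{0}(x_{1}),\ldots,Z_{0}(x_{n})\bigr) \geq \mathrm{Var}\bigl(Z_{0}(x)\,\big|\,T_{0}(x_{0}),Z_{0}(x_{1}),\ldots,Z_{0}(x_{n})\bigr).
\]
Next, because $T_{0}(x_{k}) = Z_{0}(x_{k}) + T_{0}(x_{0})$, the $\sigma$-algebras generated by $(T_{0}(x_{0}), Z_{0}(x_{1}),\ldots,Z_{0}(x_{n}))$ and by $(T_{0}(x_{0}), T_{0}(x_{1}),\ldots, T_{0}(x_{n}))$ coincide, and moreover $Z_{0}(x) = T_{0}(x) - T_{0}(x_{0})$ differs from $T_{0}(x)$ by a quantity that is measurable with respect to this $\sigma$-algebra. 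Hence the right-hand side above equals $\mathrm{Var}(T_{0}(x)\,|\,T_{0}(x_{0}), T_{0}(x_{1}),\ldots,T_{0}(x_{n}))$.

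Finally, applying \eqref{ineq:SLND} of Lemma~\ref{C1} to the $n+1$ points $x_{0}, x_{1}, \ldots, x_{n}$ yields
\[
\mathrm{Var}\bigl(T_{0}(x)\,\big|\,T_{0}(x_{0}), T_{0}(x_{1}),\ldots,T_{0}(x_{n})\bigr) \geq K_{2,2}\min_{0\leq k\leq n}\rho_{\alpha}^{2}\bigl(d_{\mathbb{S}^{2}}(x,x_{k})\bigr),
\]
and chaining the three inequalities gives the claim with $K_{2,2}' = K_{2,2}$. I do not foresee any real obstacle: the argument is essentially an observation about Gaussian conditioning, and no new estimates are needed beyond those already supplied by Lemma~\ref{C1}. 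The only care required is to note that the equivalence of conditioning $\sigma$-algebras works precisely because $x_{0}$ has been added to both systems, which is exactly why the minimum on the right-hand side now ranges over $0 \le k \le n$ instead of $1 \le k \le n$.
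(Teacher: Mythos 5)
Your proof is correct, and it takes a genuinely different — and cleaner — route than the paper. The paper simply remarks that the corollary follows ``by taking the same argument as in the proof of Theorem 1 in Lan et al.\ (2016),'' i.e.\ by rerunning the spectral/SLND argument from scratch with $T_0$ replaced by $Z_0$. You instead give a purely soft reduction: the monotonicity of Gaussian conditional variance under enlargement of the conditioning family, together with the observations that $\sigma(T_0(x_0),Z_0(x_1),\ldots,Z_0(x_n))=\sigma(T_0(x_0),T_0(x_1),\ldots,T_0(x_n))$ and that subtracting the measurable quantity $T_0(x_0)$ from $T_0(x)$ leaves the conditional variance unchanged, convert the statement directly into an application of \eqref{ineq:SLND} to the $n+1$ points $x_0,\ldots,x_n$, yielding the minimum over $0\le k\le n$ exactly as required. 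All three links in the chain point in the right direction, so chaining them gives the desired lower bound with $K_{2,2}'=K_{2,2}$. Your approach buys a self-contained, one-paragraph proof that never re-enters the spectral analysis and makes transparent why the index range extends to include $k=0$; the paper's approach has the modest advantage of being methodologically uniform with Lemma \ref{C1} and not requiring the reader to recall the monotonicity property of Gaussian conditional variance, but it is longer if actually written out.
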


The next auxiliary tool that we will use to prove Theorem \ref{Th1} is the
following lemma, where $\left( \cdot \right) ^{T}$ denotes the transpose of
a matrix or a vector.

\begin{lemma}
\label{Charac-Cov} Let $\mathbf{X}=\left( X_{1},...,X_{n}\right) ^{T}\sim
N\left( \boldsymbol{\mu ,\Theta }\right) $ with a positive definite
covariance matrix $\boldsymbol{\Theta }$ and mean vector $\boldsymbol{\mu
\in }\mathbb{R}^{d}\ \left( d\geq 1\right) ,$ then for all vectors $\mathbf{t%
}\in \mathbb{R}^{d},$ we have 
\begin{equation*}
\left( 2\pi \right) ^{-n/2} \int_{\mathbb{R}^{n}} e^{-i\mathbf{t}^{T}%
\boldsymbol{\xi }} \mathbb{E}\left[ e^{i\boldsymbol{\xi }^{T}\mathbf{X}}%
\right] d\boldsymbol{\xi} \leq \left[ \mathrm{Var}\left(
X_{1}\right)\prod\limits_{j=2}^{n}\mathrm{Var}\left(
X_{j}|X_{1},...,X_{j-1}\right) \right] ^{-1/2};
\end{equation*}
Moreover, 
\begin{equation*}
\mathrm{Var}\left( X_{1}\right) \prod\limits_{j=2}^{n}\mathrm{Var}%
\left(X_{j}|X_{1},...,X_{j-1}\right) =\det \left\{ \boldsymbol{\Theta }%
\right\} .
\end{equation*}
\end{lemma}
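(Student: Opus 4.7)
The plan is to prove both statements by exploiting the explicit form of the multivariate Gaussian characteristic function, together with the Cholesky/innovations decomposition for the determinant identity.

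For the inequality, I would first substitute the known characteristic function
$$\mathbb{E}\bigl[e^{i\boldsymbol{\xi}^{T}\mathbf{X}}\bigr]=\exp\Bigl(i\boldsymbol{\xi}^{T}\boldsymbol{\mu}-\tfrac{1}{2}\boldsymbol{\xi}^{T}\boldsymbol{\Theta}\boldsymbol{\xi}\Bigr),$$
into the left-hand side and recognize the resulting object as (up to normalization) the Fourier inversion of a Gaussian density. Concretely, I would complete the square in $\boldsymbol{\xi}$ to reduce the integrand to a shifted real Gaussian times a constant, and then evaluate the standard Gaussian integral using $\boldsymbol{\Theta}^{1/2}$ (well-defined since $\boldsymbol{\Theta}$ is positive definite) to obtain
$$(2\pi)^{-n/2}\int_{\mathbb{R}^{n}}e^{-i\mathbf{t}^{T}\boldsymbol{\xi}}\mathbb{E}\bigl[e^{i\boldsymbol{\xi}^{T}\mathbf{X}}\bigr]d\boldsymbol{\xi}=(\det\boldsymbol{\Theta})^{-1/2}\exp\Bigl(-\tfrac{1}{2}(\mathbf{t}-\boldsymbol{\mu})^{T}\boldsymbol{\Theta}^{-1}(\mathbf{t}-\boldsymbol{\mu})\Bigr).$$
Since $\boldsymbol{\Theta}^{-1}$ is also positive definite, the quadratic form in the exponent is non-negative, so the exponential is bounded above by $1$, yielding the claimed bound $(\det\boldsymbol{\Theta})^{-1/2}$.

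For the determinant identity, I would use the innovations (Gram--Schmidt) representation. Define $\varepsilon_{1}=X_{1}-\mathbb{E}[X_{1}]$ and, for $j\geq 2$,
$$\varepsilon_{j}=X_{j}-\mathbb{E}\bigl[X_{j}\,\big|\,X_{1},\ldots,X_{j-1}\bigr].$$
Since $\mathbf{X}$ is Gaussian, each conditional expectation is an affine function of $X_{1},\ldots,X_{j-1}$, so the map $\mathbf{X}\mapsto\boldsymbol{\varepsilon}$ is a lower-triangular linear transformation with unit diagonal, hence has determinant $1$. Moreover, by construction the $\varepsilon_{j}$ are uncorrelated and therefore, being jointly Gaussian, independent, with $\mathrm{Var}(\varepsilon_{j})=\mathrm{Var}(X_{j}\,|\,X_{1},\ldots,X_{j-1})$. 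Computing the covariance matrix of $\boldsymbol{\varepsilon}$ via the triangular transformation then gives
$$\det\boldsymbol{\Theta}=\prod_{j=1}^{n}\mathrm{Var}(\varepsilon_{j})=\mathrm{Var}(X_{1})\prod_{j=2}^{n}\mathrm{Var}(X_{j}\,|\,X_{1},\ldots,X_{j-1}).$$

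I do not anticipate a serious obstacle here: both parts are fairly standard Gaussian computations. The one place to be slightly careful is justifying the Gaussian Fourier integral, which uses the positive definiteness of $\boldsymbol{\Theta}$ to guarantee absolute integrability and validity of the square-completion; and to note that the integrand, despite being complex, yields a real value after integration because the density is real and symmetric in the imaginary part.
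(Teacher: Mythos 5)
Your proof is correct. The paper states this lemma without giving a proof (it is cited implicitly as a standard fact, closely related to Lemma 2.3 in Cuzick and Du Preeze \cite{Cuzick82}), so there is no internal argument to compare against. Your two-step approach is exactly the canonical one: Fourier inversion of the Gaussian characteristic function identifies the left-hand side with $(\det\boldsymbol{\Theta})^{-1/2}\exp\bigl(-\frac{1}{2}(\mathbf{t}-\boldsymbol{\mu})^{T}\boldsymbol{\Theta}^{-1}(\mathbf{t}-\boldsymbol{\mu})\bigr)$, bounded by $(\det\boldsymbol{\Theta})^{-1/2}$ because $\boldsymbol{\Theta}^{-1}$ is positive definite; and the Gram--Schmidt/innovations factorization gives the determinant identity, since the centered transformation $\mathbf{X}-\boldsymbol{\mu}\mapsto\boldsymbol{\varepsilon}$ is unit lower triangular and produces uncorrelated (hence, by Gaussianity, independent) innovations with the required conditional variances.

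Two small polish remarks, neither of which is a gap. First, the map $\mathbf{X}\mapsto\boldsymbol{\varepsilon}$ is affine, not linear, because the conditional expectations carry constant terms; it is its \emph{linear part} (equivalently, the map on centered variables) that is unit lower triangular, and that is what enters $\mathrm{Cov}(\boldsymbol{\varepsilon})=A\boldsymbol{\Theta}A^{T}$. Second, when completing the square you shift the contour by the purely imaginary vector $i\boldsymbol{\Theta}^{-1}(\boldsymbol{\mu}-\mathbf{t})$, so ``shifted real Gaussian'' should read as a complex contour shift justified by analyticity and Gaussian decay; positive definiteness of $\boldsymbol{\Theta}$ guarantees both the absolute integrability needed for Fourier inversion and the validity of this shift. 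Finally, you could note that the lemma's stated domain $\mathbf{t},\boldsymbol{\mu}\in\mathbb{R}^{d}$ is evidently a typo for $\mathbb{R}^{n}$, which is how you (correctly) read it.
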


In order to prove the lower bounds in Theorems \ref{Th2}, we will also need
to exploit the following two lemmas from Talagrand \cite{Talagrand95}. Let $%
\{f(x),x\in M\}$ be a centered Gaussian field indexed by $M$ and let $%
d_{f}(x,y)=\sqrt{\mathbb{E}[(f(x)-f(y))^{2}]}$ be its canonical metric on $M$%
. For a $d_{f}$-compact manifold $M$, denote by $N_{d_{f}}\left(
M,\varepsilon \right) $ the smallest number of balls of radius $\varepsilon $
in metric $d_{f}$ that are needed to cover $M.$

\begin{lemma}
\label{DudleyLB} If $N_{d_{f}}\left(M,\varepsilon \right) \leq \Psi
\left(\varepsilon \right)$ for all $\varepsilon >0$ and the function $\Psi$
satisfies 
\begin{equation*}
\frac{1}{K_{2,3}}\Psi \left( \varepsilon \right) \leq \Psi \left( \frac{%
\varepsilon }{2}\right) \leq K_{2,3}\Psi \left( \varepsilon \right), \quad
\forall \, \varepsilon >0,
\end{equation*}
where $K_{2,3}>0$ is a finite constant. Then 
\begin{equation*}
\mathbb{P}\left\{ \sup_{s,t\in M}\left\vert f(s) -f(t) \right\vert \leq
u\right\} \geq \exp \left( -K_{2,4}\Psi \left(u\right) \right),
\end{equation*}
where $K_{2,4}>0$ is a constant depending only on $K_{2,3}$.
\end{lemma}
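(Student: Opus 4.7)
The plan is to combine a dyadic chaining decomposition with the Gaussian correlation (Khatri--\v{S}id\'ak) inequality and then exploit the doubling property of $\Psi$ to bound the resulting sum by a multiple of $\Psi(u)$. First I would fix dyadic scales $\varepsilon_n = u\,2^{-n}$ for $n\geq 0$ and pick finite $\varepsilon_n$-nets $M_n\subseteq M$ in the metric $d_f$ with cardinality $|M_n|\leq N_{d_f}(M,\varepsilon_n)\leq \Psi(\varepsilon_n)$. For $n\geq 1$ and each $s\in M_n$, choose an approximant $\pi_n(s)\in M_{n-1}$ with $d_f(s,\pi_n(s))\leq \varepsilon_{n-1}$, so that the increment $f(s)-f(\pi_n(s))$ is a centered Gaussian of standard deviation at most $\varepsilon_{n-1}$. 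Chaining gives, for any $s,t$ in the countable dense set $\bigcup_n M_n$,
\[
|f(s)-f(t)|\;\leq\; 2\sum_{n\geq 1}\max_{s'\in M_n}|f(s')-f(\pi_n(s'))|,
\]
so that if we fix amplitudes $u_n>0$ with $2\sum_n u_n\leq u$, the small-ball event on the left is implied by the intersection of the events $\{\max_{s'\in M_n}|f(s')-f(\pi_n(s'))|\leq u_n\}$ over $n$.

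Second, I would apply Khatri--\v{S}id\'ak to the centered Gaussian family $\{f(s')-f(\pi_n(s'))\colon n\geq 1,\, s'\in M_n\}$ and the symmetric convex slabs $\{|\cdot|\leq u_n\}$, producing
\[
\mathbb{P}\!\left(\bigcap_{n,\,s'}\{|f(s')-f(\pi_n(s'))|\leq u_n\}\right)\;\geq\;\prod_{n\geq 1}\left(\frac{c\,u_n}{\varepsilon_{n-1}}\right)^{|M_n|},
\]
valid whenever $u_n\leq \varepsilon_{n-1}$, using the standard centered-Gaussian lower bound $\mathbb{P}(|X|\leq t)\geq c\,t/\sigma$ for $t\leq \sigma$. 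Taking logarithms reduces the problem to showing
\[
\sum_{n\geq 1}|M_n|\,\log\!\frac{\varepsilon_{n-1}}{c\,u_n}\;\leq\;K_{2,4}\,\Psi(u).
\]

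Third, the main obstacle is precisely this last estimate: the doubling hypothesis gives only $|M_n|\leq \Psi(\varepsilon_n)\leq K_{2,3}^{\,n}\,\Psi(u)$, which grows geometrically in $n$, so a naive choice $u_n\sim u\,2^{-n}$ (for which the logarithmic factor is bounded) produces a divergent sum. To overcome this one must pick the amplitudes $u_n$ decreasing fast enough that $\log(\varepsilon_{n-1}/u_n)$ grows geometrically and compensates the geometric growth of $|M_n|$, equivalently, concentrating the budget $u$ on the first few scales and letting the remaining scales contribute only through a controlled tail. A workable choice is $u_n=u\,\alpha^{n}$ with $\alpha$ tuned so that $\alpha\cdot K_{2,3}<1$; the series then telescopes up to a geometric factor and delivers the bound $C\Psi(u)$, with the doubling assumption used precisely to align the cutoff scale's entropy with $\Psi(u)$. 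This balancing is the combinatorial heart of Talagrand's argument, and is where the hypothesis $\frac{1}{K_{2,3}}\Psi(\varepsilon)\leq \Psi(\varepsilon/2)\leq K_{2,3}\Psi(\varepsilon)$ enters essentially; once it is in place, continuity of $f$ transfers the bound from the countable dense net to the supremum over $M$, completing the proof.
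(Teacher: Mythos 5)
The paper itself does not prove this lemma: it is taken verbatim from Talagrand (1995), cited two lines above the statement. So there is no in-paper proof to compare against, but the proposal can still be assessed on its own terms — and it contains a genuine gap at the step you yourself identify as ``the combinatorial heart'' of the argument.

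The specific error is arithmetic. With $\varepsilon_n = u\,2^{-n}$ and the proposed $u_n = u\,\alpha^n$, one has
\[
\log\frac{\varepsilon_{n-1}}{c\,u_n} \;=\; \log\frac{2}{c} + n\log\frac{1}{2\alpha},
\]
which grows \emph{linearly} in $n$, not geometrically. Therefore the condition $\alpha K_{2,3}<1$ does nothing to control
\[
\sum_{n\geq 1}|M_n|\,\log\frac{\varepsilon_{n-1}}{c\,u_n}
\;\lesssim\;
\Psi(u)\sum_{n\geq 1}K_{2,3}^{\,n}\bigl(1+n\bigr),
\]
which diverges whenever $K_{2,3}>1$, i.e.\ in every nontrivial case. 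The claim that ``the series then telescopes up to a geometric factor'' is incorrect.

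The gap is in fact structural and cannot be repaired by a different choice of $u_n$. For the one-dimensional bound $\mathbb{P}(|X|\leq u_n)\geq c\,u_n/\varepsilon_{n-1}$ to be a valid lower bound for a probability one must have $u_n\leq \varepsilon_{n-1}$, so every summand satisfies $\log(\varepsilon_{n-1}/(c\,u_n))\geq \log(1/c)>0$. Since $|M_n|=N_{d_f}(M,\varepsilon_n)\to\infty$ as $n\to\infty$ (and indeed $\sum_n|M_n|=\infty$ by the doubling hypothesis), the chaining sum diverges for \emph{any} admissible allocation of the budget. Put differently: chaining combined with Khatri--\v{S}id\'ak and the elementary pointwise Gaussian small-ball estimate can only ever yield $\mathbb{P}(\sup|f(s)-f(t)|\leq u)\geq 0$, which is vacuous. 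A quick sanity check with standard Brownian motion on $[0,1]$ (so $\Psi(\varepsilon)=\varepsilon^{-2}$, $K_{2,3}=4$) confirms this: the optimal small-ball exponent is $\asymp u^{-2}=\Psi(u)$, but the chaining sum above is infinite. To obtain the sharp constant $K_{2,4}\Psi(u)$ in the exponent, Talagrand's argument must control the fine scales by a mechanism other than the linear bound $\mathbb{P}(|X|\leq t)\geq ct/\sigma$ at every chain link — for instance by stopping the chaining at an intermediate scale and invoking Gaussian concentration (Borell--TIS) for the oscillation below that scale, or by a Gaussian comparison argument. Your framework (chaining, nets, \v{S}id\'ak) is the right language, but the ingredient that makes the sum finite is exactly the ingredient that is missing.
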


\begin{lemma}
\label{DudleyUB} Let $\{f(x), x \in M\} $ be a centered Gaussian field $a.s.$
bounded on a $d_f$-compact set $M$. There exists a universal constant $%
K_{2,5}>0$ such that for any $u>0,$ we have 
\begin{equation*}
\mathbb{P}\left\{ \sup_{x\in M}f(x) \geq K_{2,5}\left( u+\int_{0}^{\overline{%
d}}\sqrt{\log N_{d}\left( M,\varepsilon \right) }d\varepsilon
\right)\right\} \leq \exp \left( -\frac{u^{2}}{\overline{d}^{2}}\right) ,
\end{equation*}
where $\overline{d}=\sup \left\{ d_{f}( x,y): x,y\in M\right\} $ is the
diameter of $M$ in the metric $d_f$.
\end{lemma}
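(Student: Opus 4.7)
The plan is to realize this inequality as a combination of Dudley's entropy bound on $\mathbb{E}[\sup_{x\in M} f(x)]$ and the Borell-TIS concentration inequality; equivalently, one can carry out a single generic chaining argument that directly produces a Gaussian tail with $\bar d^{\,2}$ in the denominator. I will outline the chaining version, which yields both ingredients simultaneously.

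First I would set up a dyadic chain: let $\varepsilon_k=\bar d\,2^{-k}$ and pick a minimal $d_f$-net $N_k\subset M$ with $|N_k|=N_{d_f}(M,\varepsilon_k)$, noting that $|N_0|=1$. For $x\in M$, let $\pi_k(x)\in N_k$ denote a nearest approximation, so that $d_f(\pi_k(x),\pi_{k-1}(x))\leq 3\varepsilon_{k-1}$. Telescoping gives
$$f(x)-f(\pi_0)=\sum_{k\ge 1}\bigl(f(\pi_k(x))-f(\pi_{k-1}(x))\bigr),$$
where each summand is a centered Gaussian increment with variance at most $9\varepsilon_{k-1}^{\,2}$, and at level $k$ there are at most $|N_k|^2$ distinct such increments. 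A union bound combined with the standard Gaussian tail yields, for any $t_k>0$,
$$\mathbb{P}\Bigl\{\sup_{x\in M}\bigl|f(\pi_k(x))-f(\pi_{k-1}(x))\bigr|\ge t_k\Bigr\}\leq 2|N_k|^2\exp\bigl(-t_k^{\,2}/(18\varepsilon_{k-1}^{\,2})\bigr).$$

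The key step is the choice of thresholds $t_k=c_1\varepsilon_{k-1}\sqrt{\log|N_k|}+c_2\varepsilon_{k-1}u/\bar d$ with $c_1,c_2$ large enough that the level-wise failure probabilities form a geometric series summing to $\exp(-u^2/\bar d^{\,2})$. A straightforward comparison then gives $\sum_k t_k\leq K_{2,5}\bigl(\int_0^{\bar d}\sqrt{\log N_{d_f}(M,\varepsilon)}\,d\varepsilon+u\bigr)$, using that $\sum_k \varepsilon_{k-1}\sqrt{\log|N_k|}$ is comparable to the entropy integral and $\sum_k\varepsilon_{k-1}/\bar d=O(1)$. Since $\pi_0$ is a single fixed point, the centered Gaussian $f(\pi_0)$ contributes only an additional additive tail term which is either dominated by $\bar d$-scaled tails (in the typical application of this paper, one applies the lemma to increment processes of the form $T_0(\cdot)-T_0(x_0)$, which vanish at $x_0$) or absorbed into an enlarged universal constant $K_{2,5}$.

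The main obstacle is calibrating $c_1,c_2$ so that the geometric decay across levels simultaneously absorbs the $|N_k|^2$ factors and leaves a single Gaussian tail scaled by $u/\bar d$; this is a classical but finicky book-keeping exercise. Since this inequality is packaged Gaussian-process machinery, in practice one simply cites Talagrand (1995); the sketch above records how the statement can be proved from first principles.
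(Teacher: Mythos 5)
The paper does not actually prove Lemma~\ref{DudleyUB}; it simply quotes it (together with Lemma~\ref{DudleyLB}) from Talagrand (1995), as you correctly note at the end of your proposal. So there is no in-paper proof to compare against, and your chaining sketch is offered as a from-first-principles derivation of a cited black box. The sketch is sound in outline: a dyadic net at scales $\varepsilon_k=\bar d\,2^{-k}$, a telescoping chain $f(x)-f(\pi_0)=\sum_{k\ge1}(f(\pi_k(x))-f(\pi_{k-1}(x)))$, a union bound with Gaussian tails at each level, thresholds $t_k$ split into an entropy part and a $u$-proportional part, and the standard comparison of $\sum_k\varepsilon_{k-1}\sqrt{\log|N_k|}$ with the entropy integral. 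This is exactly how such statements are obtained, and the calibration of $c_1,c_2$, while tedious, is routine.

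One point deserves correction. You write that the base term $f(\pi_0)$ can ``either be dominated by $\bar d$-scaled tails \ldots\ or absorbed into an enlarged universal constant $K_{2,5}$.'' The second alternative does not work: $f(\pi_0)$ is a centered Gaussian whose standard deviation need bear no relation to $\bar d$, and no choice of a universal constant $K_{2,5}$ can produce a tail $\exp(-u^2/\bar d^2)$ for it (take $M$ a single point, so $\bar d=0$, and the claimed bound is vacuously false). The inequality as stated is really a statement about processes that vanish at some point of $M$, or more generally about processes with $\sup_x\sqrt{\operatorname{Var} f(x)}\lesssim\bar d$; in the paper every application is to an increment process $T_0(\cdot)-T_0(x_0)$ or to $\sup_{x,y}|T_0(x)-T_0(y)|$, so the first alternative is the relevant one, and the chaining bound on $\sup_x|f(x)-f(\pi_0)|$ is exactly what is used. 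With that small caveat flagged, your sketch is a faithful account of the Talagrand-type concentration argument that the lemma encapsulates.
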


Based on Lemmas \ref{C1} and \ref{DudleyUB}, we obtain the following result:

\begin{lemma}
\label{LemTupper} Under Condition (A) with $2<\alpha <4$, there exists
positive constants $K_{2,6},K_{2,7}$ depending only on $\alpha $ and $K_{0}$
such that for any $D\left( z,r\right) \subset \mathbb{S}^{2}$ and $%
0<r<\delta ,$ we have for any $u>K_{2,6}r^{\left( \alpha -2\right) /2},$ 
\begin{equation}  \label{ineq:Tlower}
\mathbb{P}\left\{ \sup_{x,y\in D\left( z,r\right) }\left\vert
T_{0}\left(x\right) -T_{0}\left( y\right) \right\vert \geq K_{2,7}u\right\}
\geq \exp\left( -\frac{u^{2}}{\left\vert \rho _{\alpha }\left( 2r\right)
\right\vert^{2}}\right) ,
\end{equation}
\end{lemma}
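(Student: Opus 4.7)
The strategy is to reduce the supremum lower bound to the one-dimensional Gaussian tail of a single well-chosen increment. First I would select two points $x_0, y_0 \in D(z,r)$ whose spherical distance is comparable to $r$---for instance, near-diametrically opposite in the disk, with $d_{\mathbb{S}^2}(x_0,y_0) \in [r/2, r]$. The two-sided variogram estimate of Lemma~\ref{C1} then gives
\[
\sigma^2 := \mathbb{E}\bigl[(T_0(x_0) - T_0(y_0))^2\bigr] \in \bigl[c_1 \rho_\alpha(2r)^2,\ c_2 \rho_\alpha(2r)^2\bigr]
\]
for positive constants $c_1, c_2$ depending only on $\alpha$ and $K_0$, since $\rho_\alpha(r) \asymp \rho_\alpha(2r)$.

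Next, because $\sup_{x,y \in D(z,r)} |T_0(x)-T_0(y)| \ge |T_0(x_0) - T_0(y_0)|$, it suffices to bound below $2\Phi^c(K_{2,7} u/\sigma)$, where $\Phi^c$ denotes the standard normal tail. I would apply the classical estimate $\Phi^c(t) \ge \frac{t}{\sqrt{2\pi}(1+t^2)} e^{-t^2/2}$ with $t = K_{2,7} u/\sigma$, and choose $K_{2,7}^2 \le c_1$ so that $K_{2,7}^2/(2\sigma^2) \le 1/(2\rho_\alpha(2r)^2)$. This forces the Gaussian exponential factor $\exp(-K_{2,7}^2 u^2/(2\sigma^2))$ to dominate $\exp(-u^2/(2\rho_\alpha(2r)^2))$, producing a multiplicative slack of $\exp(u^2/(2\rho_\alpha(2r)^2))$ over the target $\exp(-u^2/\rho_\alpha(2r)^2)$.

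Finally, I would choose $K_{2,6}$ large enough that the hypothesis $u > K_{2,6} \rho_\alpha(r) = K_{2,6} 2^{-(\alpha-2)/2}\rho_\alpha(2r)$ pushes $v := u/\rho_\alpha(2r)$ above a threshold $v_0$ at which the exponential slack $e^{v^2/2}$ swallows the polynomial correction $\sqrt{2\pi}(1+t^2)/t$ in the Gaussian lower bound (note $t \asymp v$ since $\sigma \asymp \rho_\alpha(2r)$). Such a $v_0$ exists because $e^{v^2/2}/v \to \infty$, and the required $K_{2,6}$ is determined by $c_1$ and $c_2$, and therefore depends only on $\alpha$ and $K_0$.

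The main delicate step is the joint calibration of the two constants: $K_{2,7}$ is forced to be small (of size $\sqrt{c_1}$) by the variance lower bound from Lemma~\ref{C1}, which purchases the exponential slack; $K_{2,6}$ must then be taken large enough that this slack dominates the polynomial prefactor inherent in the Gaussian lower-tail estimate. Everything else in the argument is routine Gaussian computation, and no use of the Borell-type inequality is needed for this direct two-point reduction.
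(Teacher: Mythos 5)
The direction of the inequality in the statement is almost certainly a typographical error: it should read $\leq$, not $\geq$. Three features of the paper confirm this. The lemma's name indicates an \emph{upper} tail bound for the oscillation of $T_0$; the paper's proof cites Lemma~\ref{DudleyUB}, a Borell--Dudley concentration inequality whose conclusion is $\mathbb{P}\{\sup \cdots\} \leq \exp(-u^2/\overline{d}^2)$; and the lemma is invoked in the chaining argument for Theorem~\ref{Th2} in place of Lemma 3.1 of \cite{Xiao97}, where one must show that large oscillations of $\mathbf{T}$ are \emph{unlikely}. The companion Lemma~\ref{LemTtail} for the spectral remainder $T_0^{\Delta}$ is stated with $\leq$ and proved by the same entropy route, which reinforces this reading.

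Your proposal takes the $\geq$ literally and proves a tail \emph{lower} bound by dropping the supremum to a single two-point increment and invoking the standard Gaussian lower-tail estimate. As a proof of that (far weaker) statement it is correct and carefully calibrated: $\sigma^2 \asymp \rho_\alpha(2r)^2$ by Lemma~\ref{C1}, $K_{2,7}$ is taken small so that $K_{2,7}^2 u^2/(2\sigma^2) \leq u^2/(2\rho_\alpha(2r)^2)$, and $K_{2,6}$ is taken large so the leftover exponential slack absorbs the polynomial prefactor. But the paper needs the opposite inequality, and a single-increment reduction cannot yield it: picking one pair $(x_0,y_0)$ bounds the supremum only from below. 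The intended proof goes through the variogram upper bound in Lemma~\ref{C1} to obtain the entropy estimate $N_{d_{T_0}}(D(z,r),\epsilon) \leq C r^2 \epsilon^{-4/(\alpha-2)}$, from which $\int_0^{\overline d}\sqrt{\log N}\, d\epsilon \leq C_{2,1} r^{\alpha/2-1}$ and $\overline{d} \leq \sqrt{K_{2,1}}\,\rho_\alpha(2r)$, and then applies Lemma~\ref{DudleyUB}: for $u > C_{2,1} r^{(\alpha-2)/2}$ the entropy integral is dominated by $u$, giving $\mathbb{P}\{\sup_{x,y\in D(z,r)}|T_0(x)-T_0(y)| \geq K_{2,7} u\} \leq \exp(-u^2/(K_{2,1}\rho_\alpha(2r)^2))$. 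Concentration of measure via a metric-entropy bound, rather than a pointwise Gaussian tail estimate, is the necessary tool here.
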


\begin{proof}
Recall Lemma \ref{C1}, we have 
\begin{equation*}
\sqrt{K_{2,1}^{-1}}\rho _{\alpha }( x,y) \leq d_{T_{0}}\left(x,y\right) \leq 
\sqrt{K_{2,1}}\rho _{\alpha }( x,y) .
\end{equation*}
It follows immediately that, for any $D\left( z,r\right) \subset \mathbb{S}%
^{2},$ and any $\epsilon \in \left( 0,r\right) ,$ 
\begin{equation*}
N_{d_{T_{0}}}\left( D\left( z,r\right) ,\epsilon \right) \leq \frac{2\pi
r^{2}}{\pi \left( \epsilon /\sqrt{K_{2,1}}\right) ^{4/\left( \alpha-2\right)
}} \leq 2\left( K_{2,1}\right)^{2/\left( \alpha -2\right) } \frac{r^{2}}{%
\epsilon ^{4/\left( \alpha -2\right) }},
\end{equation*}
and 
\begin{equation*}
\overline{d}=\sup \left\{ d_{T_{0}}\left( x,y\right) :x,y\in
D\left(z,r\right) \right\} \leq \sqrt{K_{2,1}}\rho _{\alpha }\left(
2r\right) ,
\end{equation*}
whence similar to the argument in \cite{LuanXiao12}, we have 
\begin{equation*}
\int_{0}^{\overline{d}}\sqrt{\log N_{d_{T_{0}}}\left( D\left(
z,r\right),\epsilon \right) }d\epsilon \leq C_{2,1}r^{\alpha /2-1},
\end{equation*}
for some positive constant $C_{2,1}$ depending on $K_{2,1}.$ 
Let the constants $K_{2,6}=C_{2,1}$ and $\ K_{2,7}=2K_{2,5}C_{2,1}$, then by
exploiting Lemma \ref{DudleyUB}, we derive $\left( \ref{ineq:Tlower}\right) $%
.
\end{proof}

Finally, we prove the following approximation for small ball probability,
which is analogy to the argument in \cite{Xiao09}, Theorem 5.1.

\begin{lemma}
\label{Lem:Small Ball P} Under Condition (A) with $2<\alpha <4$, there
exists a positive constant $K_{2,8}$ depending only on $\alpha $ and $K_{0},$
such that for any $\epsilon >0$ and $D\left( z,r\right) \subset \mathbb{S}%
^{2}$ with $0<r<\delta $, we have 
\begin{equation*}
\mathbb{P}\left\{ \sup_{x,y\in D( z,r) }\left\vert T_{0}(x) -T_{0}( y)
\right\vert \leq \epsilon \right\} \geq \exp \left( -K_{2,8}\frac{r^{2}}{%
\epsilon ^{4/\left( \alpha -2\right) }}\right) ,
\end{equation*}
\end{lemma}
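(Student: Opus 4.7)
The plan is to apply Talagrand's small-ball lower bound (Lemma \ref{DudleyLB}) to the Gaussian field $\{T_0(x), x \in D(z,r)\}$ after controlling its metric entropy in the canonical metric $d_{T_0}(x,y) = \sqrt{\mathbb{E}[(T_0(x)-T_0(y))^2]}$.

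First I would use Lemma \ref{C1} to dominate $d_{T_0}$ by the intrinsic metric: for $x,y$ with $d_{\mathbb{S}^2}(x,y) < \delta$, one has $d_{T_0}(x,y) \leq \sqrt{K_{2,1}}\,\rho_\alpha(d_{\mathbb{S}^2}(x,y))$. Inverting $\rho_\alpha(s)=s^{(\alpha-2)/2}$, a $d_{T_0}$-ball of radius $\varepsilon$ contains any $d_{\mathbb{S}^2}$-ball of radius $(\varepsilon/\sqrt{K_{2,1}})^{2/(\alpha-2)}$. A standard volume-comparison argument on the sphere (using \eqref{Areas}) then gives, exactly as in the proof of Lemma \ref{LemTupper},
\begin{equation*}
N_{d_{T_0}}\!\bigl(D(z,r),\varepsilon\bigr) \;\leq\; C_{2,2}\,\frac{r^2}{\varepsilon^{4/(\alpha-2)}} \;=:\; \Psi(\varepsilon),
\end{equation*}
where $C_{2,2}$ depends only on $\alpha$ and $K_0$.

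Next I would verify that $\Psi$ satisfies the doubling hypothesis of Lemma \ref{DudleyLB}. Since $\Psi(\varepsilon/2)=2^{4/(\alpha-2)}\Psi(\varepsilon)$, the constant $K_{2,3}=2^{4/(\alpha-2)}$ works in both directions of the required two-sided bound. Thus Lemma \ref{DudleyLB} applies to the centered Gaussian field $T_0$ restricted to $D(z,r)$, yielding
\begin{equation*}
\mathbb{P}\left\{\sup_{x,y\in D(z,r)}|T_0(x)-T_0(y)|\leq \epsilon\right\} \;\geq\; \exp\bigl(-K_{2,4}\,\Psi(\epsilon)\bigr) \;=\; \exp\!\left(-K_{2,8}\,\frac{r^2}{\epsilon^{4/(\alpha-2)}}\right),
\end{equation*}
with $K_{2,8}=K_{2,4}\,C_{2,2}$, which is the stated bound.

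There is essentially no obstacle here: the only mild point is to notice that Talagrand's inequality is stated for $\sup_{s,t}|f(s)-f(t)|$ (not just $\sup_s f(s)$), which is exactly the quantity we need, and that the doubling constant $K_{2,3}$ depends on $\alpha$ only through the power $4/(\alpha-2)$, so the resulting $K_{2,8}$ depends only on $\alpha$ and $K_0$ as claimed. One should also remark that the bound is vacuous when $\epsilon$ is much smaller than $r^{(\alpha-2)/2}/(\log\cdot)^{\cdots}$, but as a lower bound it remains valid for all $\epsilon>0$ since $\Psi(\epsilon)<\infty$.
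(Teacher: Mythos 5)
Your proposal is correct and follows essentially the same route as the paper: bound $d_{T_0}$ by $\rho_\alpha$ via Lemma \ref{C1}, derive the metric entropy estimate $N_{d_{T_0}}(D(z,r),\varepsilon)\leq C_{2,2}\,r^2/\varepsilon^{4/(\alpha-2)}$, and invoke Lemma \ref{DudleyLB}. The only difference is cosmetic — you explicitly verify the doubling condition on $\Psi$, a step the paper leaves implicit.
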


\begin{proof}
We first prove the lower bound of the small ball probability. The canonical
metric $d_{T_{0}}$ on $\mathbb{S}^{2}$ is defined by 
\begin{equation*}
d_{T_{0}}\left( x,y\right) =\sqrt{\mathbb{E}\left\vert T_{0}\left( x\right)
-T_{0}\left( y\right) \right\vert ^{2}}.
\end{equation*}%
By $\left( \ref{ineq:variogram}\right) $\ in Lemma \ref{C1}, we have for any 
$x,y\in \mathbb{S}^{2}$ with $d_{\mathbb{S}^{2}}\left( x,y\right) <\delta $, 
\begin{equation*}
d_{T_{0}}\left( x,y\right) \leq \sqrt{K_{2,1}}\rho _{\alpha }(x,y).
\end{equation*}%
In the meantime, recall the metric entropy, it follows immediately that 
\begin{equation*}
N_{d_{T_{0}}}\left( D\left( z,r\right) ,\epsilon \right) \leq \frac{2\pi
r^{2}}{\pi \left( \epsilon /\sqrt{K_{2,1}}\right) ^{4/\left( \alpha
-2\right) }}\leq C_{2,2}\frac{r^{2}}{\epsilon ^{4/\left( \alpha -2\right) }}.
\end{equation*}%
where $C_{2,2}$ is a positive constant depending only on $K_{2,1}$ and $%
\alpha .$ Hence, Lemma \ref{Lem:Small Ball P}\ is derived by exploiting
Lemma \ref{DudleyLB} with $K_{4,1}=K_{2,4}C_{2,2}.$
\end{proof}

\section{Local times and their joint continuity}

\label{Sec:Joint continuity}

Let us first recall that, for any Borel set $D\subseteq \mathbb{S}^{2}$ and $%
\omega \in \Omega $, the \emph{occupation measure} of $\mathbf{T}$ on $D$ is
defined by 
\begin{equation*}
\mu _{D}\left( I,\omega \right) :=\nu \left\{ x\in D:\mathbf{T}(x,\omega)
\in I\right\}
\end{equation*}
for all Borel sets $I\subset \mathbb{R}^{d}$, where 
$\nu $ is the canonical area measure on the unit sphere. If $\mu
_{D}(\cdot,\omega )$ is absolutely continuous \emph{w.r.t} the Lebesgue
measure $\lambda _{d}$ on $\mathbb{R}^{d}$, then we say that $\mathbf{T}%
(\cdot,\omega )$ has \emph{local times} on $D$, and define a local time $%
L\left( \mathbf{t},D,\omega \right) $ as the Radon--Nikod\'{y}m derivative
of $\mu_{D}$ with respect to $\lambda _{d}$, i.e., 
\begin{equation}  \label{def:Local time}
L\left( \mathbf{t},D,\omega \right) =\frac{d\mu _{D}(\cdot ,\omega )}{%
d\lambda _{d}}(\mathbf{t}),\qquad \forall \mathbf{t}\in \mathbb{R}^{d}.
\end{equation}
As usual, we will from now on omit $\omega $ from the notation for the local
times.

We refer to Geman and Horowitz \cite{GeHor} for a comprehensive survey on
local times of random fields. In particular, Theorems 6.3 and 6.4 in \cite%
{GeHor} show that $L\left( \mathbf{t},D\right) $ satisfies the following
occupation density identity: For every Borel set $D\subseteq \mathbb{S}^{2}$
and for every measurable function $f:\mathbb{R}\rightarrow \mathbb{R}_{+},$ 
\begin{equation}  \label{eq:LT-occup density}
\int_{D}f\left( \mathbf{T}( x) \right) d\nu ( x) =\int_{\mathbb{R}^{d}}f( 
\mathbf{t}) L( \mathbf{t},D) d\mathbf{t}.
\end{equation}

We split the proof of Theorem \ref{Th1} into two parts.

\begin{proof}[\textbf{Proof of Theorem \protect\ref{Th1}: Existence}]
The Fourier transform of the occupation measure $\mu _{D}$ is 
\begin{equation*}
\widehat{\mu }_{D}\left( \boldsymbol{\xi }\right) =\int_{D}e^{i\boldsymbol{%
\xi }^{T}\mathbf{T}(x)}d\nu (x),\quad \forall \boldsymbol{\xi \in \mathbb{R}}%
^{d}.
\end{equation*}%
Now applying Fubini's theorem and the independence of components of $\mathbf{%
T}$, we derive 
\begin{equation*}
\begin{split}
\mathbb{E}\left[ \int_{\mathbb{R}^{d}}\left\vert \widehat{\mu }\left( \xi
\right) \right\vert ^{2}d\xi \right] & =\int_{D}d\nu \left( x\right)
\int_{D}d\nu \left( y\right) \int_{\mathbb{R}^{d}}\mathbb{E}\left[ e^{i%
\boldsymbol{\xi }^{T}\left( \mathbf{T}\left( x\right) -\mathbf{T}\left(
y\right) \right) }\right] d\boldsymbol{\xi } \\
& =\int \int_{D\times D}\left( \mathbb{E}\left[ \big|{T_{0}}\left( x\right) -%
{T_{0}}\left( y\right) \big|^{2}\right] \right) ^{-d/2}d\nu \left( x\right)
d\nu \left( y\right) \\
& \leq K_{2,1}^{d/2}\int \int_{D\times D}\left[ d_{\mathbb{S}^{2}}(x,y)%
\right] ^{(1-\alpha /2)d}d\nu (x)d\nu (y) \\
& \leq K_{2,1}^{d/2}\int_{0}^{2\pi }\int_{0}^{\pi }\vartheta ^{(1-\alpha
/2)d}\sin \vartheta d\vartheta d\phi <\infty .
\end{split}%
\end{equation*}%
In the above, the first inequality follows from $\left( \ref{ineq:variogram}%
\right) $ in Lemma \ref{C1}\ and the last inequality follows from the
condition that $\alpha \in \left( 2,4\right) $ and $(\alpha -2)d<4$. Hence, 
by the Plancherel theorem (see i.e., \cite{Rudin}, Ch. 9), we see that $%
\mathbf{T}$ a.s. has local times which can be represented in the $L^{2}(%
\mathbb{R}^{d})$-sense as 
\begin{equation}
L\left( \mathbf{t},D\right) =\frac{1}{2\pi }\int_{\mathbb{R}^{d}}e^{-i%
\boldsymbol{\xi }^{T}\mathbf{t}}\int_{D}e^{i\boldsymbol{\xi }^{T}\mathbf{T}%
\left( x\right) }d\nu \left( x\right) d\boldsymbol{\xi }.  \label{repLT}
\end{equation}%
The proof of existence is thus completed.
\end{proof}

An immediate consequence of the above proof of the existence of local times
is the following:

\begin{corollary}
Under the same conditions as in Theorem \ref{Th1}, almost surely the range $%
\mathbf{T}\left( \mathbb{S}^{2}\right) $ has positive Lebesgue measure.
\end{corollary}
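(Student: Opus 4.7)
The plan is to deduce the corollary directly from the occupation density formula \eqref{eq:LT-occup density} together with the $L^2$-existence of local times just established. First I would specialize \eqref{eq:LT-occup density} to $D=\mathbb{S}^2$ and $f\equiv 1$, which yields
\[
4\pi = \nu(\mathbb{S}^2) = \int_{\mathbb{R}^d} L(\mathbf{t},\mathbb{S}^2)\,d\mathbf{t}
\]
almost surely. Hence on an event of probability one, $L(\cdot,\mathbb{S}^2)$ is a nonnegative, integrable, and non-identically-zero function on $\mathbb{R}^d$, so the set $\{\mathbf{t}\in\mathbb{R}^d:L(\mathbf{t},\mathbb{S}^2)>0\}$ has strictly positive Lebesgue measure.

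Next I would argue that, up to a Lebesgue null set, this positivity set is contained in the range $\mathbf{T}(\mathbb{S}^2)$. If $U\subset \mathbb{R}^d$ is any open set with $U\cap \mathbf{T}(\mathbb{S}^2)=\varnothing$, then no $x\in\mathbb{S}^2$ has $\mathbf{T}(x)\in U$, so $\mu_{\mathbb{S}^2}(U)=0$, and by the definition \eqref{def:Local time} of the local time as the Radon--Nikod\'ym derivative of $\mu_{\mathbb{S}^2}$ we get $L(\mathbf{t},\mathbb{S}^2)=0$ for almost every $\mathbf{t}\in U$. In particular the positivity set of $L(\cdot,\mathbb{S}^2)$ is a.e. contained in the closure $\overline{\mathbf{T}(\mathbb{S}^2)}$.

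Finally I would invoke continuity of the sample paths: under Condition (A) with $\alpha>2$, the variogram bound \eqref{ineq:variogram} in Lemma \ref{C1} together with a standard Kolmogorov/Dudley argument (or the cited results of Lang--Schwab and Lan et al.) gives that $\mathbf{T}$ has a.s. continuous sample paths on the compact set $\mathbb{S}^2$, so the range $\mathbf{T}(\mathbb{S}^2)$ is a.s. compact and equals its closure. Combining this with the two previous steps shows that $\mathbf{T}(\mathbb{S}^2)$ contains a set of positive Lebesgue measure, hence $\lambda_d(\mathbf{T}(\mathbb{S}^2))>0$ almost surely.

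The argument is essentially bookkeeping: the only substantive point is the containment of the positivity set of $L(\cdot,\mathbb{S}^2)$ in $\overline{\mathbf{T}(\mathbb{S}^2)}$, which is immediate from the definition of occupation measure, and the use of path continuity to identify closure with range. No further probabilistic estimate is needed beyond what has already been done for existence.
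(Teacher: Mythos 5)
Your argument is correct and in spirit matches what the paper leaves implicit when it calls the corollary ``an immediate consequence of the above proof of the existence of local times.'' The detour through sample path continuity and the closure of the range, however, is not needed and adds an extra hypothesis into the chain of reasoning that the standard argument avoids. Since the occupation measure $\mu_{\mathbb{S}^2}$ is by definition carried on $\mathbf{T}(\mathbb{S}^2)$ --- if $N$ is any Borel set containing the range then $\mu_{\mathbb{S}^2}(\mathbb{R}^d\setminus N)=\nu\{x:\mathbf{T}(x)\notin N\}=0$ --- and has total mass $\mu_{\mathbb{S}^2}(\mathbb{R}^d)=\nu(\mathbb{S}^2)=4\pi>0$, it cannot be absolutely continuous with respect to $\lambda_d$ if $\mathbf{T}(\mathbb{S}^2)$ has (outer) Lebesgue measure zero: one would get $4\pi=\mu_{\mathbb{S}^2}(N)=\int_N L(\mathbf{t},\mathbb{S}^2)\,d\mathbf{t}=0$, a contradiction. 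This one-line contradiction is what makes the corollary ``immediate''; it needs neither continuity nor the occupation density formula with $f\equiv 1$, only the absolute continuity established in the existence proof. Your version, with the open-set exhaustion and the identification of the closure with the range via continuity, is sound (continuity does hold under Condition (A) with $\alpha>2$, and it is used repeatedly in the paper), so there is no gap --- it is simply longer than the intended deduction.
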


In order to prove the joint continuity of local times, we first prove the
moment estimates in Lemmas \ref{L1} and \ref{L2}, by extending the argument
in Xiao \cite{Xiao97} to the spherical random field $\mathbf{T}$. The new
ingredient for the proofs is the property of strong local nondeterminism in
Lemma \ref{C1}. \medskip

\begin{lemma}
\label{L1} Under conditions of Theorem \ref{Th1}, there exists a positive
constant $K_{3,1}$ depending only on $\alpha ,\ d$ and $K_{0},$ such that
for any open set $D\subset \mathbb{S}^{2}$ with $\nu \left( D\right) >0$, $%
\mathbf{t}\in \mathbb{R}^{d}$, and integers $n\geq 1,$ it holds that 
\begin{equation*}
\mathbb{E}\left[ L\left( \mathbf{t},D\right) ^{n}\right] \leq
K_{3,1}^{n}\left( \left( n-1\right) !\right) ^{d\left( \alpha
-2\right)/4}\nu ( D) ^{( n-1) \beta /4+1}.
\end{equation*}
where $\beta $ is defined in Theorem \ref{Th1}.
\end{lemma}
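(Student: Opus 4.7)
The plan is to extract the $n$-th moment directly from the Fourier representation \eqref{repLT}. Raising both sides to the $n$-th power and invoking Fubini together with the independence of $T_{1},\ldots,T_{d}$ yields
\begin{equation*}
\mathbb{E}\bigl[L(\mathbf{t},D)^{n}\bigr]=\frac{1}{(2\pi)^{dn}}\int_{D^{n}}\int_{\mathbb{R}^{dn}}e^{-i\sum_{j}\boldsymbol{\xi}_{j}^{T}\mathbf{t}}\prod_{l=1}^{d}\mathbb{E}\Bigl[e^{i\sum_{j}\xi_{j}^{(l)}T_{0}(x_{j})}\Bigr]\,d\boldsymbol{\xi}\,d\nu(x_{1})\cdots d\nu(x_{n}).
\end{equation*}
Each inner expectation is the characteristic function of the Gaussian vector $(T_{0}(x_{1}),\ldots,T_{0}(x_{n}))$, so applying Lemma~\ref{Charac-Cov} coordinate by coordinate to the $d$ independent components bounds the $\boldsymbol{\xi}$-integral by $(2\pi)^{dn/2}[\det\Sigma_{n}]^{-d/2}$, where $\Sigma_{n}=\mathrm{Cov}(T_{0}(x_{1}),\ldots,T_{0}(x_{n}))$. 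This reduces the problem to
\begin{equation*}
\mathbb{E}\bigl[L(\mathbf{t},D)^{n}\bigr]\leq(2\pi)^{-dn/2}\int_{D^{n}}[\det\Sigma_{n}]^{-d/2}\,d\nu(x_{1})\cdots d\nu(x_{n}).
\end{equation*}

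By the identity in the second part of Lemma~\ref{Charac-Cov}, $\det\Sigma_{n}=\prod_{j=1}^{n}\mathrm{Var}(T_{0}(x_{j})\mid T_{0}(x_{1}),\ldots,T_{0}(x_{j-1}))$. The unit variance of $T_{0}$ together with the strong local nondeterminism \eqref{ineq:SLND} then gives
\begin{equation*}
[\det\Sigma_{n}]^{-d/2}\leq C^{n}\prod_{j=2}^{n}\bigl[\min_{1\leq k<j}d_{\mathbb{S}^{2}}(x_{j},x_{k})\bigr]^{-\lambda},\qquad\lambda:=\frac{d(\alpha-2)}{2},
\end{equation*}
where $C$ depends only on $\alpha,d,K_{0}$. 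The hypothesis $\beta=4-(\alpha-2)d>0$ is precisely $\lambda<2$, which is what makes the resulting integrand locally integrable over $D\subset\mathbb{S}^{2}$.

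The main task is then to bound
\begin{equation*}
I_{n}:=\int_{D^{n}}\prod_{j=2}^{n}\bigl[\min_{k<j}d_{\mathbb{S}^{2}}(x_{j},x_{k})\bigr]^{-\lambda}d\nu(x_{1})\cdots d\nu(x_{n})
\end{equation*}
by iterated integration in the reverse order $x_{n},x_{n-1},\ldots,x_{2}$, since at each step only one factor depends on the integration variable. The heart of the argument is the uniform inner estimate
\begin{equation*}
\sup_{x_{1},\ldots,x_{j-1}\in\mathbb{S}^{2}}\int_{D}\min_{1\leq k<j}d_{\mathbb{S}^{2}}(x_{j},x_{k})^{-\lambda}d\nu(x_{j})\leq C\,(j-1)^{\lambda/2}\,\nu(D)^{\beta/4}.
\end{equation*}
To prove this I would partition $D$ into the Voronoi cells $V_{k}=\{y\in D:d_{\mathbb{S}^{2}}(y,x_{k})=\min_{l<j}d_{\mathbb{S}^{2}}(y,x_{l})\}$. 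On $V_{k}$ the integrand equals $d_{\mathbb{S}^{2}}(\cdot,x_{k})^{-\lambda}$, a radially decreasing function about $x_{k}$, so the spherical Hardy--Littlewood rearrangement inequality together with a direct polar-coordinate computation (using $\lambda<2$ and $\nu(D(x_{k},r))\asymp r^{2}$) gives $\int_{V_{k}}d_{\mathbb{S}^{2}}(\cdot,x_{k})^{-\lambda}d\nu\leq C\nu(V_{k})^{(2-\lambda)/2}=C\nu(V_{k})^{\beta/4}$. Summing over $k<j$ and applying the concave Jensen inequality $\sum_{k<j}\nu(V_{k})^{\beta/4}\leq(j-1)^{1-\beta/4}\bigl(\sum_{k<j}\nu(V_{k})\bigr)^{\beta/4}\leq(j-1)^{\lambda/2}\nu(D)^{\beta/4}$ (valid since $\beta/4\leq 1$ and $1-\beta/4=\lambda/2$) then delivers the inner estimate.

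Iterating through $j=n,n-1,\ldots,2$ and closing with $\int_{D}d\nu(x_{1})=\nu(D)$ yields
\begin{equation*}
I_{n}\leq C^{n-1}\nu(D)\prod_{j=2}^{n}(j-1)^{\lambda/2}\nu(D)^{\beta/4}=C^{n-1}((n-1)!)^{\lambda/2}\nu(D)^{1+(n-1)\beta/4},
\end{equation*}
which, combined with the $(2\pi)^{-dn/2}$ prefactor and $\lambda/2=d(\alpha-2)/4$, gives the claimed bound with a suitable $K_{3,1}$. The main obstacle is precisely extracting the fractional factorial $((n-1)!)^{\lambda/2}$ rather than the much larger $(n-1)!$ that the naive bound $\min_{k<j}\leq\text{single summand}$ would produce; this sharper growth rate is what will make the moment estimate usable for the chaining argument underlying the joint continuity in Theorem~\ref{Th1}, and the Voronoi-plus-Jensen step above is exactly what gains it.
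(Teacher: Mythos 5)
Your proof is correct and follows essentially the same route as the paper: Fourier representation of $L$, characteristic-function bound via Lemma~\ref{Charac-Cov}, strong local nondeterminism to convert the determinant into a product of $\min$-distances, then a Voronoi partition plus Jensen to control the iterated integral. The only small variation is that where the paper bounds $\int_{\Gamma_i}d_{\mathbb{S}^2}(\cdot,x_i)^{-\lambda}d\nu$ by parametrizing $\Gamma_i$ in polar coordinates about $x_i$ and applying Jensen over the angular variable, you invoke the spherical Hardy--Littlewood rearrangement inequality to reduce to the case where $\Gamma_i$ is a geodesic ball; both deliver $\int_{\Gamma_i}\leq C\,\nu(\Gamma_i)^{\beta/4}$, after which the discrete Jensen step and the iteration are identical.
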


\begin{proof}
It follows from $( \ref{repLT}) $ (see also \cite{GeHor,Xiao09}) that for
all $\mathbf{t}\in \mathbb{R}^{d}$ and integers $n\geq 1,$ 
\begin{equation*}
\mathbb{E}\left[ L\left( \mathbf{t},D\right) ^{n}\right] =\frac{1}{%
\left(2\pi \right) ^{n}} \int_{D^{n}} \int_{\mathbb{R}^{nd}}e^{-i%
\sum_{j=1}^{n}\mathbf{t}^{T}\boldsymbol{\xi }_{j}}\mathbb{E}\left[
e^{i\sum_{j=1}^{n} \boldsymbol{\xi }_{j}^{T}\mathbf{T}\left( x_{j}\right) }%
\right] \prod\limits_{j=1}^{n}\left( d\nu \left( x_{j}\right) `d\boldsymbol{%
\xi }_{j}\right) ,
\end{equation*}
where $\boldsymbol{\xi }_{j}\in \mathbb{R}^{d}$ for $j=1,...,n$. By the
positive definiteness of covariance matrix of $T_{0}\left(
x_{1}\right),...,T_{0}\left( x_{n}\right) ,$ we have 
\begin{eqnarray*}
\lefteqn{\int_{\mathbb{R}^{nd}}\mathbb{E}\left[ e^{i\sum_{j=1}^{n}%
\boldsymbol{\xi }_{j}^{T}\left[ \mathbf{T}( x_{j}) -\mathbf{t}\right] }%
\right] \prod \limits_{j=1}^{n} d\boldsymbol{\xi }_{j}} \\
&=&\prod\limits_{k=1}^{d} \int_{\mathbb{R}^{n}}\mathbb{E}\left[
e^{i\sum_{j=1}^{n}\xi _{j,k}\left[ T_{k}\left(x_{j}\right) -t_{k}\right] }%
\right] \prod\limits_{j=1}^{n}d\xi _{j,k} \\
&\leq &\frac{( 2\pi ) ^{nd/2}}{\left[ \det \left( \mathrm{Cov}\left( T_{0}(
x_{1}) ,...,T_{0}( x_{n}) \right)\right) \right] ^{d/2}} \\
&\leq &( 2\pi ) ^{nd/2}\left[ \mathrm{Var}( T_{0}(x_{1}) )
\prod\limits_{j=2}^{n}\mathrm{Var}( T_{0}(x_{j}) |T_{0}( x_{1}) ,...,T_{0}(
x_{j-1})) \right] ^{-d/2}.
\end{eqnarray*}
in view of Lemma \ref{Charac-Cov}. Recall the assumption of unit variance of 
$T_{0}$ and the inequality $\left( \ref{ineq:SLND}\right) $ in Lemma \ref{C1}%
, we obtain that 
\begin{equation}  \label{ELeq}
\mathbb{E}\left[ L( \mathbf{t},D) ^{n}\right] \leq C_{3,1}^{n-1}\left[
\int_{D^{n}}\prod\limits_{j=2}^{n}\frac{1}{\min_{1\leq i\leq
j-1}\rho_{\alpha }^{d}\left( x_{i},x_{j}\right) }d\boldsymbol{\nu }\right]
\end{equation}
where the constant $C_{3,1}>0$ depends only on $K_{2,2},$ and $d\boldsymbol{%
\nu }=d\nu \left( x_{1}\right) \cdots d\nu \left( x_{n}\right) .$ Now let $%
j\in \left\{ 2,...,n\right\} $ fixed, and define the following sets that are
disjoint except on the boundaries, 
\begin{equation}  \label{GammaD}
\Gamma _{i} =\left\{ x\in D: d_{\mathbb{S}^{2}}( x,x_{i}) =\min \left\{ d_{%
\mathbb{S}^{2}}( x,x_{i^{\prime }}) , i^{\prime}=1,...,j-1\right\} \right\} .
\end{equation}
Observing $D=\cup _{i=1}^{j-1}\Gamma _{i}$, we have 
\begin{eqnarray}
\lefteqn{\int_{D}\frac{1}{\min_{1\leq i\leq j-1}\rho _{\alpha
}^{d}(x_{i},x_{j}) }d\nu ( x_{j}) }  \notag \\
&=&\sum_{i=1}^{j-1}\int_{\Gamma _{j}}\frac{1}{\rho _{\alpha
}^{d}\left(x_{i},x\right) }d\nu \left( x\right) \leq
\sum_{i=1}^{j-1}\int_{0}^{2\pi}\int_{0}^{r_{j}( \phi ) }\frac{\theta }{\rho
_{\alpha}^{d}( \theta ) }d\theta d\phi  \notag \\
&=&\sum_{i=1}^{j-1}\frac{2}{\beta }\int_{0}^{2\pi }\left[ r_{i}\left( \phi
\right) \right] ^{\beta /2}d\phi \leq C_{3,2}\sum_{i=1}^{j-1}\left[%
\int_{0}^{2\pi }\frac{1}{2}\left[ r_{i}\left( \phi \right) \right] ^{2}d\phi %
\right] ^{\beta /4}  \label{ineq:mineq2} \\
&\leq & C_{3,2}\left( j-1\right) \left( \frac{1}{\left( j-1\right) }%
\sum_{i=1}^{j-1}\nu \left( \Gamma _{i}\right) \right) ^{\beta /4} \leq
C_{3,2}\left( j-1\right) \left( \frac{\nu \left( D\right) }{\left(j-1\right) 
}\right) ^{\beta /4}  \label{ineq:mineq3}
\end{eqnarray}
where we have used Jensen's inequality above and the constant $C_{3,2}>0$
depends on $\alpha $ and $d$. Moreover, the inequality in $( \ref%
{ineq:mineq2}) $ holds if and only if $\beta =4-( \alpha-2) d>0.$ It is
readily seen that 
\begin{eqnarray*}
\mathbb{E}\left[ L\left( \mathbf{t},D\right) ^{n}\right] &\leq&
C_{3,1}^{n-1}\int_{D}\left[ \int_{D^{n-1}}\prod\limits_{j=2}^{n}\frac{d\nu (
x_{1}) \cdots d\nu \left( x_{n-1}\right) }{\min_{1\leq i\leq j-1}\rho
_{\alpha }^{d}( x_{i},x_{j}) }\right] d\nu (x_{n}) \\
&\leq & K_{3,1}^{n-1}\left[ \left( n-1\right) !\right] ^{( \alpha-2) d/4}\nu
( D) ^{( n-1) \beta /4+1},
\end{eqnarray*}%
in view of the two results $\left( \ref{ELeq}\right) $ and (\ref{ineq:mineq3}%
), where the constant $K_{3,1}>0$ depends on $\alpha $ and $d$. Hence, the
lemma is proved.
\end{proof}

Recall $\eta =\beta /2-( \alpha -2) \gamma $ defined in $\left(\ref{eta}%
\right) .$ Obviously, $\eta <\beta /2<2$ for any $\gamma \in (0,1) .$ Now we
have the following moment estimation:

\begin{lemma}
\label{L2} Under conditions of Theorem \ref{Th1}, there exists a positive
constant $K_{3,2}$ depending on $\alpha,$ $d\ K_{0\text{ }}$ and $\gamma ,$
such that for any open set $D\subset \mathbb{S}^{2}$ with $\nu
\left(D\right) >0$, and any $\mathbf{s},\mathbf{t}\in \mathbb{R}^{d}$, all
even integers $n\geq 2,\,0<\gamma <1$ satisfying $\eta >0,$ we have 
\begin{equation*}
\mathbb{E}\left\{ \left[ L( \mathbf{t},D) -L( \mathbf{s},D) \right]
^{n}\right\} \leq K_{3,2}^{n}( n!) ^{2-\eta/2}\left\Vert \mathbf{t}-\mathbf{s%
}\right\Vert ^{n\gamma }\nu (D) ^{( n-1) \eta /2+1},.
\end{equation*}
\end{lemma}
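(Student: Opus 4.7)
The plan is to adapt the Fourier-analytic moment method that proved Lemma \ref{L1} so as to exhibit the Hölder factor $\|\mathbf{t}-\mathbf{s}\|^{n\gamma}$, following the strategy in Xiao \cite{Xiao97} but with the spherical SLND of Lemma \ref{C1} replacing its Euclidean counterpart. Starting from the Fourier representation \eqref{repLT}, the moment is written as
\[
  \mathbb{E}\bigl\{[L(\mathbf{t},D)-L(\mathbf{s},D)]^n\bigr\}
  =\frac{1}{(2\pi)^{nd}}\int_{D^n}\!\int_{\mathbb{R}^{nd}}\prod_{j=1}^{n}\bigl(e^{-i\boldsymbol{\xi}_j^T\mathbf{t}}-e^{-i\boldsymbol{\xi}_j^T\mathbf{s}}\bigr)\,\mathbb{E}\Bigl[e^{i\sum_j \boldsymbol{\xi}_j^T\mathbf{T}(x_j)}\Bigr]\,d\boldsymbol{\xi}\,d\boldsymbol{\nu},
\]
and I would then estimate the oscillatory factor, the Gaussian characteristic function, and the spatial integration in turn.

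For the oscillatory factor, the elementary inequality $|e^{iu}-1|\leq 2^{1-\gamma}|u|^\gamma$ (valid for $0<\gamma\leq 1$) applied to each $j$ yields the pointwise majorisation
\[
  \prod_{j=1}^n\bigl|e^{-i\boldsymbol{\xi}_j^T\mathbf{t}}-e^{-i\boldsymbol{\xi}_j^T\mathbf{s}}\bigr|\leq 2^{n(1-\gamma)}\|\mathbf{t}-\mathbf{s}\|^{n\gamma}\prod_{j=1}^n\|\boldsymbol{\xi}_j\|^{\gamma}.
\]
Bounding $\|\boldsymbol{\xi}_j\|^\gamma\leq\sum_{k=1}^d|\xi_{j,k}|^\gamma$ (subadditivity of $x\mapsto x^\gamma$) and expanding produces a sum of $d^n$ monomials $\prod_j |\xi_{j,k_j}|^\gamma$. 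The independence of $T_1,\dots,T_d$ factors the Gaussian characteristic function into $d$ identical Gaussian densities in $\mathbb{R}^n$ with covariance $\Sigma=(R(x_i,x_j))$, and for each monomial a weighted-Gaussian estimate (obtained through a Cholesky change of variables together with Lemma \ref{Charac-Cov}) yields uniformly a bound of the form $C^n\prod_{j=1}^n\sigma_j^{-(d+\gamma)}$, where $\sigma_j^2=\mathrm{Var}(T_0(x_j)\mid T_0(x_1),\dots,T_0(x_{j-1}))$.

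Invoking SLND via \eqref{ineq:SLND} converts this into $C^n\prod_{j=2}^n\rho_\alpha^{-(d+\gamma)}(\min_{k<j}d_{\mathbb{S}^2}(x_j,x_k))$, and the spatial integration over $D^n$ proceeds exactly as in Lemma \ref{L1}: partition $D$ into the Voronoi cells around $x_1,\dots,x_{j-1}$, pass to polar coordinates around each centre, and apply Jensen's inequality twice to obtain, per integration step,
\[
  \int_D\min_{k<j}\rho_\alpha^{-(d+\gamma)}(x,x_k)\,d\nu(x)\leq C\,(j-1)^{1-q/2}\,\nu(D)^{q/2}, \qquad q=2-(\alpha-2)(d+\gamma)/2,
\]
which is positive precisely because $\eta>0$. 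Taking the product over $j=2,\dots,n$ and recombining with $\|\mathbf{t}-\mathbf{s}\|^{n\gamma}$ yields a bound whose exponents on $(n-1)!$ and $\nu(D)$ are respectively $1-q/2=(\alpha-2)(d+\gamma)/4$ and $(n-1)q/2+1$; using $\beta=4-(\alpha-2)d$ one checks $2-\eta/2\geq (\alpha-2)(d+\gamma)/4$ and $q/2\geq\eta/2$, so that, after $(n-1)!\leq n!$ and the absorption of bounded powers of $\nu(D)\leq 4\pi$ into $K_{3,2}^n$, the bound weakens to the stated $K_{3,2}^n(n!)^{2-\eta/2}\|\mathbf{t}-\mathbf{s}\|^{n\gamma}\nu(D)^{(n-1)\eta/2+1}$.

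The main technical hurdle is the weighted-Gaussian estimate in the second paragraph: one must distribute the $\gamma$-weights among the $d$ independent components of $\mathbf{T}$, keep uniform control across all $d^n$ monomials, and express the resulting bound cleanly in terms of the successive conditional variances $\sigma_j^2$ so that SLND applies directly. Once this is in place, the remaining Voronoi/Jensen combinatorial step is essentially verbatim from the proof of Lemma \ref{L1}.
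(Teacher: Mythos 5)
Your high-level strategy matches the paper's (Fourier representation of the $n$th moment, $|e^{iu}-1|\leq 2^{1-\gamma}|u|^\gamma$, subadditivity to split $\|\boldsymbol{\xi}_j\|^\gamma$ into coordinates, a weighted-Gaussian estimate, SLND, then the Voronoi/Jensen spatial integration from Lemma~\ref{L1}), but the weighted-Gaussian step is stated incorrectly, and the error is not cosmetic.

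You claim the bound
\[
\int_{\mathbb{R}^{nd}}e^{-\frac12 \mathrm{Var}\left(\sum_j\boldsymbol{\xi}_j^T\mathbf{T}(x_j)\right)}\prod_{j=1}^n|\xi_{j,k_j}|^{\gamma}\,d\boldsymbol{\xi}\ \leq\ C^n\prod_{j=1}^n\sigma_j^{-(d+\gamma)},\quad \sigma_j^2=\mathrm{Var}\bigl(T_0(x_j)\mid T_0(x_1),\dots,T_0(x_{j-1})\bigr).
\]
The Cholesky/Cuzick--Du~Preez estimate (Lemma~2.4 of \cite{Xiao97}) does not yield this. Writing the integral as $(2\pi)^{n/2}(\det\Sigma)^{-1/2}\,\mathbb{E}\bigl[\prod_j|W_j|^{\gamma}\bigr]$ with $W\sim N(0,\Sigma^{-1})$ and applying H\"older, one finds $\mathrm{Var}(W_j)=(\Sigma^{-1})_{jj}=\widetilde\sigma_j^{-2}$, where $\widetilde\sigma_j^2=\mathrm{Var}\bigl(T_0(x_j)\mid T_0(x_i),\,i\neq j\bigr)$ is the conditional variance given \emph{all other} points, not the preceding ones. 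The determinant part gives $(\det\Sigma)^{-d/2}=\prod_j\sigma_j^{-d}$, but the moment part gives $\prod_j\widetilde\sigma_j^{-\gamma}$. Since conditioning on more reduces variance, $\widetilde\sigma_j\leq\sigma_j$, so $\prod_j\sigma_j^{-\gamma}$ is \emph{smaller} than $\prod_j\widetilde\sigma_j^{-\gamma}$: your claimed expression is not a valid upper bound for the integral.

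What is missing is the paper's permutation argument. Because $\widetilde\sigma_j$ conditions on both ``past'' and ``future'' points, one relabels the $x_j$'s by a nearest-neighbour greedy permutation $\pi$ so that the nearest competitor to $x_{\pi(j)}$ is one of its two $\pi$-neighbours; controlling both neighbours costs a squaring, so that $\prod_j\widetilde\sigma_j^{-\gamma}\leq C^n\prod_j\bigl[\min_{i<j}\rho_\alpha(d_{\mathbb{S}^2}(x_{\pi(j)},x_{\pi(i)}))\bigr]^{-2\gamma}$. Combined with the determinant part this produces the exponent $d+2\gamma$, not $d+\gamma$. This factor of two is exactly what makes the spatial integral converge under the stated restriction $\gamma<\gamma_0=\min\{\beta/(2(\alpha-2)),1\}$ and what produces $\eta=2-(\alpha-2)(d+2\gamma)/2$. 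Your $q=2-(\alpha-2)(d+\gamma)/2$ would allow $\gamma$ up to $\beta/(\alpha-2)$, twice the correct range, which is a red flag. Your final ``weakening'' step (using $q>\eta$ and $\nu(D)\leq 4\pi$) is arithmetically consistent, but it is built on a bound that does not hold; once you replace $\sigma_j^{-(d+\gamma)}$ by $\sigma_j^{-d}\widetilde\sigma_j^{-\gamma}$ and supply the permutation argument, you land exactly on the paper's proof with exponent $d+2\gamma$ and $\eta$ appearing directly, with no need for a weakening step.
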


\begin{proof}
Recall $( \ref{repLT}) ,$ we have 
\begin{eqnarray*}
\lefteqn{\mathbb{E}\left\{ \left[ L\left( \mathbf{t},D\right) -L\left( 
\mathbf{s},D\right) \right] ^{n}\right\}} \\
&=&\frac{1}{\left( 2\pi \right) ^{n}} \int_{D^{n}}\int_{\mathbb{R}^{nd}}%
\mathbb{E} \left[ e^{i\sum_{j=1}^{n}\boldsymbol{\xi }_{j}^{T}\mathbf{T}
\left( x_{j}\right) }\right] \prod\limits_{j=1}^{n}\left[ e^{-i\mathbf{t}^{T}%
\boldsymbol{\xi }_{j}}-e^{-i\mathbf{s}^{T}\boldsymbol{\xi }_{j}}\right] d\nu
\left( x_{j}\right) d\boldsymbol{\xi }_{j}.
\end{eqnarray*}
By the fact that, for any $\gamma \in ( 0,1) ,$ we have 
\begin{equation*}
\left\vert e^{-i\mathbf{t}^{T}\boldsymbol{\xi }_{j}}-e^{-i\mathbf{s}^{T}%
\boldsymbol{\xi }_{j}}\right\vert = \left\vert e^{-i\left( \mathbf{t}-%
\mathbf{s}\right) ^{T}\boldsymbol{\xi }_{j}}-1\right\vert \leq
2^{1-\gamma}\left\Vert \mathbf{t}-\mathbf{s}\right\Vert ^{\gamma }\left\Vert 
\boldsymbol{\xi }_{j}\right\Vert ^{\gamma },
\end{equation*}
with $j=1,...,n,$ and hence, 
\begin{eqnarray*}
\lefteqn{\mathbb{E}\left\{ \left[ L( \mathbf{t},D) -L( \mathbf{s},D) \right]
^{n}\right\} } \\
&\leq& \frac{2^{n\left( 1-\gamma \right)}\left\Vert \mathbf{t}-\mathbf{s}%
\right\Vert ^{n\gamma }}{\left( 2\pi \right) ^{n}} \int_{D^{n}}\int_{\mathbb{%
R}^{nd}} e^{-\frac{1}{2}\mathrm{Var}\left( \sum_{j=1}^{n}\boldsymbol{\xi }%
_{j}^{T}\mathbf{T} \left( x_{j}\right) \right)}
\prod\limits_{j=1}^{n}\left\Vert \boldsymbol{\xi }_{j}\right\Vert ^{\gamma} d%
\boldsymbol{\xi }_{j}d\nu ( \mathbf{x}) .
\end{eqnarray*}
Since $\left\vert a+b\right\vert ^{\gamma }\leq \left\vert
a\right\vert^{\gamma }+\left\vert b\right\vert ^{\gamma }$ for any real
numbers $a,b$ and $0<\gamma <1,$ we have $\left\Vert \boldsymbol{\xi }%
_{j}\right\Vert^{\gamma }\leq \sum_{k=1}^{d}\left\vert \xi _{j,k}\right\vert
^{\gamma }$, which leads to 
\begin{eqnarray*}
\lefteqn{\int_{\mathbb{R}^{nd}} e^{-\frac{1}{2} \mathrm{Var}( \sum_{j=1}^{n}%
\boldsymbol{\xi }_{j}^{T}\mathbf{T}( x_{j})
)}\prod\limits_{j=1}^{n}\left\Vert \boldsymbol{\xi }_{j}\right\Vert
^{\gamma}d\boldsymbol{\xi }_{j} } \\
&\leq &\sum_{\mathbf{k}\in \left\{ 1,...,d\right\} ^{n}} \int_{\mathbb{R}%
^{nd}} e^{-\frac{1}{2}\mathrm{Var}\left( \sum_{j=1}^{n}\boldsymbol{\xi }%
_{j}^{T}\mathbf{T}\left( x_{j}\right)
\right)}\prod\limits_{j=1}^{n}\left\vert \xi _{j,k_{j}}\right\vert ^{\gamma
}d\boldsymbol{\xi }_{j},
\end{eqnarray*}
with $\mathbf{k=}\left( k_{1},...,k_{n}\right) $ and $k_{j}\in
\left\{1,...,d\right\} $ for $j=1,...,n.$ That is 
\begin{equation}  \label{IntJ}
\left\vert \mathbb{E}\left[ L( \mathbf{t},D) -L( \mathbf{s},D) \right]
^{n}\right\vert \leq \left( 2^{\gamma }\pi \right)^{-n}\left\Vert \mathbf{t}-%
\mathbf{s}\right\Vert ^{n\gamma }\sum_{\mathbf{k}\in \left\{ 1,...,d\right\}
^{n}}\int_{D^{n}}J_{\mathbf{k}}( \mathbf{x}) d\boldsymbol{\nu }
\end{equation}
where $d\boldsymbol{\nu }=d\nu _{1}...d\nu _{n}$ and $J_{\mathbf{k}}$ is the
integral 
\begin{equation*}
J_{\mathbf{k}}\left( \mathbf{x}\right) =\int_{\mathbb{R}^{nd}}e^{-\frac{1}{2}%
\mathrm{Var}\left( \sum_{j=1}^{n}\boldsymbol{\xi }_{j}^{T}\mathbf{T}\left(
x_{j}\right) \right)}\prod\limits_{j=1}^{n}\left\vert \xi
_{j,k_{j}}\right\vert ^{\gamma }d\boldsymbol{\xi }_{j}\ ,
\end{equation*}
for each fixed point $\mathbf{k}$ in the discrete space $\left\{
1,...,d\right\} ^{n}.$ By a generalized H\"{o}lder's inequality and Lemma2.4
in \cite{Xiao97} (see also \cite{Cuzick82}), we see that $J_{\mathbf{k}%
}\left( \mathbf{x}\right) $ is bounded by 
\begin{eqnarray}
\lefteqn{\prod\limits_{j=1}^{n}\left[ \int_{\mathbb{R}^{nd}} e^{-\frac{1}{2}%
\mathrm{Var}( \sum_{j=1}^{n}\sum_{k=1}^{d}\xi_{j,k}T_{k}( x_{j}) )
}\left\vert \xi _{j,k_{j}}\right\vert^{n\gamma }d\boldsymbol{\xi }\right]
^{1/n} }  \notag \\
&=&\frac{\left( 2\pi \right) ^{nd-1} \int_{\mathbb{R}} \left\vert
v\right\vert ^{n\gamma } \exp \left\{ -\frac{v^{2}}{2}\right\} d\nu
\prod\limits_{j=1}^{n}\left( \widetilde{\sigma }_{j}^{2}\right)^{-\gamma /2}%
}{\left[ \det \mathrm{Cov}\left( T_{k}\left( x_{j}\right),1\leq k\leq
d,1\leq j\leq n\right) \right] ^{1/2}}  \notag \\
&\leq &\frac{\left( 2\pi \right) ^{nd-1}2^{\frac{n\gamma +1}{2}}\Gamma
\left( \frac{n\gamma +1}{2}\right) }{\left[ \det \mathrm{Cov}%
\left(T_{0}\left( x_{1}\right) ,...,T_{0}\left( x_{n}\right) \right) \right]
^{d/2}}\prod\limits_{j=1}^{n}\left( \widetilde{\sigma }_{j}^{2}\right)
^{-\gamma/2},  \label{ineq:Jk1}
\end{eqnarray}
where $\widetilde{\sigma }_{j}^{2}$ is the conditional variance of $%
T_{k_{j}}\left( x_{j}\right) $ given $T_{l}\left( x_{i}\right) $ ($l\neq
k_{j}$ or $l=k_{j}$ but $i\neq j$) and $\Gamma \left( \cdot \right) $ is the
Gamma function. Now we define a permutation $\pi $ of $\left\{1,...,n\right%
\} $ such that $\pi \left( 1\right) =1,$ and 
\begin{equation*}
d_{\mathbb{S}^{2}}\left( x_{\pi ( j) },x_{\pi ( j-1)}\right) =\min \left\{
d_{\mathbb{S}^{2}}\left( x_{i},x_{\pi \left(j-1\right) }\right) ,i\in
\left\{ 1,...,n\right\} \backslash \left\{ \pi( 1) ,...,\pi ( j-1) \right\}
\right\} ,
\end{equation*}
Then by $\left( \ref{ineq:SLND}\right) $\ in Lemma \ref{C1}, we see that $%
K_{2,2}^{\gamma n/2}\prod\limits_{j=1}^{n}\frac{1}{\widetilde{\sigma }%
_{j}^{\gamma }}$ is bounded by 
\begin{eqnarray*}
\lefteqn{ \prod\limits_{j=1}^{n}\frac{1}{\min \left\{ \left[ \rho _{\alpha
}\left( d_{\mathbb{S}^{2}}\left( x_{\pi \left( j\right) },x_{i}\right)
\right) \right] ^{\gamma }: i\neq \pi \left( j\right) \right\} }} \\
&\leq &\prod\limits_{j=1}^{n} \frac{1}{\left[ \min \left\{ \frac{1}{2}%
\rho_{\alpha }\left( d_{\mathbb{S}^{2}}( x_{\pi ( j) },x_{\pi( j-1) })
\right) ,\rho _{\alpha }\left( d_{\mathbb{S}^{2}}( x_{\pi ( j) },x_{\pi (
j+1) })\right) \right\} \right] ^{\gamma }} \\
&\leq &\prod\limits_{j=1}^{n}\frac{4}{\left[ \rho _{\alpha }\left( d_{%
\mathbb{S}^{2}}( x_{\pi ( j) },x_{\pi ( j-1)}) \right) \right] ^{2\gamma }}%
\leq \prod\limits_{j=1}^{n}\frac{4}{\left[ \min_{1\leq i\leq j-1}\rho
_{\alpha }\left( d_{\mathbb{S}^{2}}(x_{\pi ( j) },x_{\pi ( i) }) \right) %
\right]^{2\gamma }}
\end{eqnarray*}
For the sake of notation's simplicity, we denote by $\pi ( j) =j$ for each $%
j=1,...,n.$ By $\left( \ref{ineq:Jk1}\right) $\ and Lemma \ref{Charac-Cov},
we derive that 
\begin{equation}  \label{ineq:Jk2}
J_{\mathbf{k}}\left( \mathbf{x}\right) \leq \frac{C_{3,2}^{n}\left(n-1%
\right) !}{\Pi _{j=2}^{n}\left[ \min_{1\leq i\leq j-1}\rho _{\alpha} \left(
d_{\mathbb{S}^{2}}( x_{j},x_{i}) \right) \right]^{d+2\gamma }}
\end{equation}
where $C_{3,2}$ is a positive constant depending on $K_{2,2}$ and $d.$
Therefore, by $( \ref{IntJ}) $ and $( \ref{ineq:Jk2}) $ above, we have 
\begin{equation}  \label{ineq:Ik1}
\mathbb{E}\left\{ \left[ L( \mathbf{t},D) -L( \mathbf{s},D) \right]
^{n}\right\} \leq \left( \frac{dC_{3,2}}{2^{\gamma }\pi }\right) ^{n}
\left\Vert \mathbf{t}-\mathbf{s}\right\Vert ^{n\gamma }n!\sum_{\mathbf{k}\in
\{ 1,...,d\} ^{n}}I_{\mathbf{k}},
\end{equation}
where 
\begin{equation}  \label{def:Ik}
I_{\mathbf{k}}=:\int_{D^{n}}\Pi _{j=2}^{n}\left[ \min_{1\leq i\leq
j-1}\rho_{\alpha }\left( d_{\mathbb{S}^{2}}\left( x_{j},x_{i}\right) \right) %
\right]^{-\left( d+2\gamma \right) }d\boldsymbol{\nu }.
\end{equation}
Similar to the argument in the proof of Lemma \ref{L1}, for any fixed $j\in
\left\{ 2,...,n\right\} ,$ we define the domains $\Gamma _{i},\ i=1,...,j-1,$
same as in $\left( \ref{GammaD}\right) ,$ then we can obtain 
\begin{eqnarray}
A_{j} &:=&\int_{D}\frac{1}{\left[ \min_{1\leq i\leq j-1}\rho _{\alpha}\left(
d_{\mathbb{S}^{2}}\left( x_{j},x_{i}\right) \right) \right]^{d+2\gamma }}%
d\nu \left( x_{j}\right)  \notag \\
&=&\sum_{i=1}^{j-1}\int_{\Gamma _{i}}\frac{1}{\left[ \min_{1\leq i\leq
j-1}\rho _{\alpha }\left( d_{\mathbb{S}^{2}}( x_{j},x_{i})\right) \right]
^{d+2\gamma }}d\nu \left( x_{j}\right)  \notag \\
&\leq &\sum_{i=1}^{j-1}\int_{0}^{2\pi }\int_{0}^{r_{i}\left( \phi \right) }%
\frac{\theta }{\left[ \rho _{\alpha }\left( \theta \right) \right]%
^{d+2\gamma }}d\theta d\phi =\sum_{i=1}^{j-1}\eta ^{-1}\int_{0}^{2\pi }\left[%
r_{i}\left( \phi \right) \right] ^{\eta }d\phi  \notag \\
&\leq &C_{3,3}\sum_{i=1}^{j-1}\left[ \int_{0}^{2\pi }\frac{1}{2}\left[%
r_{i}\left( \phi \right) \right] ^{2}d\phi \right] ^{\eta/2}
=C_{3,3}\sum_{i=1}^{j-1}\left[ \nu \left( \Gamma _{i}\right) \right]^{\eta
/2}  \label{intmin} \\
&\leq &C_{3,3}\left( j-1\right) \left( \frac{1}{\left( j-1\right) }
\sum_{i=1}^{j-1}\nu \left( \Gamma _{i}\right) \right) ^{\eta /2} \leq
C_{3,3}\left( j-1\right) \left( \frac{\nu \left( D\right) }{\left(j-1\right) 
}\right) ^{\eta /2}.  \label{ineq:Aj}
\end{eqnarray}%
where constant $\eta $ is the one defined in $\left( \ref{eta}\right) $ and $%
C_{3,3}=\eta ^{-1}\left( 2\pi \right) ^{1-\eta /2}$. The inequality in $%
\left( \ref{intmin}\right) $\ holds if and only if $0<\eta <2$, which is
always true for all $\alpha \in \left( 2,4\right) ,$ $\gamma \in
\left(0,1\right) ,\ d\in \mathbb{N}^{+}$ satisfying $\left( \alpha -2\right)
\left( d+2\gamma \right) <4.$ Hence, by the definition of $I_{\mathbf{k}}$
in $\left( \ref{def:Ik}\right) $ and the inequality $( \ref{ineq:Aj}) $
above, we have 
\begin{eqnarray}
I_{\mathbf{k}} &=&\int_{D}\left[ \int_{D^{n-1}}\Pi _{j=2}^{n}A_{_{j}}d\nu
\left( x_{n}\right) \cdots d\nu \left( x_{2}\right) \right] d\nu
\left(x_{1}\right)  \label{mineq2} \\
&\leq &( 2\pi ) ^{n\eta /2}C_{3,3}^{n}\left[ ( n-1) !\right] ^{1-\eta /2}\nu
( D) ^{( n-1) \eta /2+1}.  \notag
\end{eqnarray}
Thus, by $\left( \ref{ineq:Ik1}\right) $ and $\left( \ref{mineq2}\right) $
above, we immediately obtain 
\begin{eqnarray*}
\mathbb{E}\left\{ \left[ L\left( \mathbf{t},D\right) -L\left( \mathbf{s}%
,D\right) \right] ^{n}\right\} &\leq &\left( n-1\right) !\left\Vert \mathbf{t%
}-\mathbf{s}\right\Vert ^{n\gamma }\sum_{\mathbf{k}\in
\left\{1,...,p\right\} ^{n}}C_{3,4}^{n}r^{( n-1) \eta +2} \\
&\leq &K_{3,2}^{n}\left[ ( n-1) !\right] ^{2-\eta /2}\left\Vert \mathbf{t}-%
\mathbf{s}\right\Vert ^{n\gamma }\nu ( D) ^{(n-1) \eta /2+1}
\end{eqnarray*}
where the constants $C_{3,4}=C_{3,2}\left( 2\pi \right) ^{\eta /2}C_{3,3}$
and $K_{3,2}=dC_{3,4}.$ The proof is then completed.
\end{proof}

\begin{proof}[Proof of Theorem \textbf{\protect\ref{Th1}: }Joint Continuity]

It follows immediately from Kolmogorov's continuity theorem and Lemmas \ref%
{L1} and \ref{L2} that, for any two sets $D_{1},D_{2}\subset \mathbb{S}^{2}$
with $\nu \left( D_{i}\right) >0,i=1,2,$ and any $\mathbf{s},\mathbf{t}\in 
\mathbb{R}^{d}$, we have 
\begin{equation}  \label{ineq:JC1}
\left\vert L( \mathbf{t},D_{1}) -L( \mathbf{s},D_{1})\right\vert \leq
C_{3,5}\left\Vert \mathbf{t}-\mathbf{s}\right\Vert ^{\gamma}\nu ( D_{1})
^{\eta /2},
\end{equation}
and 
\begin{equation}  \label{ineq:JC2}
\left\vert L( \mathbf{s},D_{1}) -L( \mathbf{s},D_{2}) \right\vert \leq
C_{3,6}\nu ( \Delta D) ^{\left[ 1-( \alpha -2) d/4\right] },
\end{equation}
where $C_{3,5},C_{3,6}$ are positive constants depending only on $%
\alpha,A,d, $ and $C_{3,5}$depends on $\gamma $ as well. Moreover, the set $%
\Delta D=\left\{ D_{1}\cup D_{2}\right\} \backslash \left\{ D_{1}\cap
D_{2}\right\} $, that is, the union of $D_{1}$ and $D_{2},$ excluding their
intersection. Therefore, by $\left( \ref{ineq:JC1}\right) ,\left( \ref%
{ineq:JC2}\right) $ together with the following inequality 
\begin{equation*}
\left\vert L( \mathbf{t},D_{1}) -L( \mathbf{s},D_{2})\right\vert \leq
\left\vert L\left( \mathbf{t},D_{1}\right) -L\left( \mathbf{s},D_{1}\right)
\right\vert +\left\vert L( \mathbf{s},D_{2})-L( \mathbf{s},D_{2})
\right\vert ,
\end{equation*}
we have

\begin{enumerate}
\item[(i)] If replacing $D$ with $D\left( x,r\right) ,$ then for any $%
r_{1},r_{2}\in \left( 0,\delta \right) ,$ $x\in \mathbb{S}^{2},$ 
\begin{eqnarray*}
\lefteqn{ \left\vert L( \mathbf{t},D( x,r_{1}) ) -L(\mathbf{s},D( x,r_{2}) )
\right\vert } \\
&\leq &\pi C_{3,5}\left\Vert \mathbf{t}-\mathbf{s}\right\Vert
^{\gamma}r_{1}^{\eta }+\pi C_{3,6}\left\vert r_{1}^{2}-r_{2}^{2}\right\vert
^{\left[1-\left( \alpha -2\right) d/4\right] }
\end{eqnarray*}
and for any $r\in \left( 0,\delta \right) ,$ $x,y\in \mathbb{S}^{2}\ $with $%
d_{\mathbb{S}^{2}}\left( x,y\right) <2r,$ 
\begin{eqnarray*}
\lefteqn {\left\vert L\left( \mathbf{t},D\left( x,r\right) \right) -L\left( 
\mathbf{s},D\left( y,r\right) \right) \right\vert } \\
&\leq &\pi C_{3,5}\left\Vert \mathbf{t}-\mathbf{s}\right\Vert
^{\gamma}r^{\eta }+6C_{3,6}\left[ d_{\mathbb{S}^{2}}( x,y) r\right] ^{\left[
1-( \alpha -2) d/4\right] },
\end{eqnarray*}
where we have used the fact that 
\begin{eqnarray*}
\nu \left( D_{1}\cap D_{2}\right) &=&4\left( \frac{\pi }{2}-\arcsin \frac{d_{%
\mathbb{S}^{2}}\left( x,y\right) }{r}\right) (1-\cos r) \\
&&-d_{\mathbb{S}^{2}}\left( x,y\right) \sqrt{r^{2}-d_{\mathbb{S}^{2}}(x,y)
^{2}/4} +o\left( d_{\mathbb{S}^{2}}\left( x,y\right) r\right)
\end{eqnarray*}
and 
\begin{eqnarray*}
\nu ( \Delta D) &=& 2\pi (1-\cos r)-\nu \left( D_{1}\cap D_{2}\right) \\
&\leq & 6d_{\mathbb{S}^{2}}( x,y) r
\end{eqnarray*}
Here we have denoted by $o( \cdot ) $ the higher order terms.

\item[(ii)] If replacing $D$ with the angular section $V(\vartheta
,\varphi)=\{(\theta ,\phi ):0\leq \theta \leq \vartheta ,0\leq \phi \leq
\varphi \}$ with $(\vartheta ,\varphi )\in \lbrack 0,\pi ]\times \lbrack
0,2\pi )$, then for any 
\begin{eqnarray*}
\lefteqn{\left\vert L\left( \mathbf{t},V(\vartheta _{1},\varphi _{1})\right)
-L\left( \mathbf{s},V(\vartheta _{2},\varphi _{2})\right) \right\vert} \\
& \leq & \pi C_{3,5}\left\Vert \mathbf{t}-\mathbf{s}\right\Vert ^{\gamma
}\varphi_{1}^{\eta /2}\vartheta _{1}^{\eta } \\
&&+\pi C_{3,6}\left[ \left\vert \varphi _{1}-\varphi _{2}\right\vert \min
\left\{ \vartheta _{1}^{2},\vartheta _{2}^{2}\right\}+\min \left\{
\varphi_{1},\varphi _{2}\right\} \left\vert \vartheta
_{1}^{2}-\vartheta_{2}^{2}\right\vert \right] ^{\beta /4}
\end{eqnarray*}
The joint continuity of local times $L\left( \mathbf{t},D\right) $ \emph{%
w.r.t.} $\mathbf{t,}D$ in different cases is then obtained and hence the
proof of Theorem \ref{Th1} is completed.
\end{enumerate}
\end{proof}

\section{H\"{o}lder conditions of local times}

\label{Sec:Holder condition}

In order to prove Theorem \ref{Th2}, we need the following lemma which is
readily seen in view of Lemmas \ref{L1} and \ref{L2} :

\begin{lemma}
\label{Lemma:EL+r.v.} Under conditions of Theorem \ref{Th1}, there exist
positive constants $K_{4,1}$ $K_{4,2}$ depending on $\alpha ,\ K_{0},\ d$
and $K_{4,2}$ depends on $\gamma $ as well, such that for any open disk $%
D=D\left( x,r\right) \subset \mathbb{S}^{2}$ with $r\in \left(
0,\delta\right) $, any $\mathbf{t}\in \mathbb{R}^{d}$, $x_{0}\in \mathbb{S}%
^{2}$ and all even integers $n\geq 2,$ and $0<\gamma <1,$ 
\begin{equation*}
\mathbb{E}\left\{ \left[ L( \mathbf{t}+\mathbf{T}( x_{0}),D) \right]
^{n}\right\} \leq K_{4,1}^{n}( n!) ^{\frac{d}{4}( \alpha -2) }r^{\frac{n}{2}%
\left[ 4-( \alpha -2) d\right] },
\end{equation*}
\begin{equation*}
\mathbb{E}\left\{ \left[ L( \mathbf{t}+\mathbf{T}( x_{0},D) -L( \mathbf{s}+%
\mathbf{T}( x_{0}) ,D) \right]^{n}\right\} \leq K_{4,2}^{n}( n!) ^{2-\eta
/2}\left\Vert \mathbf{t}-\mathbf{s}\right\Vert ^{n\gamma }r^{n\eta },
\end{equation*}
where $\eta $ is the constant defined in $( \ref{eta}) $.
\end{lemma}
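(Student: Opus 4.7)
Both bounds follow the same strategy as Lemmas \ref{L1} and \ref{L2}, applied to the \emph{residual} Gaussian field $\mathbf{Z}(x) := \mathbf{T}(x) - \mathbf{T}(x_0)$ in place of $\mathbf{T}(x)$. Substituting $\mathbf{t}+\mathbf{T}(x_0)$ into the Fourier representation $(\ref{repLT})$ yields
\begin{equation*}
L(\mathbf{t}+\mathbf{T}(x_0), D) = \frac{1}{2\pi}\int_{\mathbb{R}^d} e^{-i\boldsymbol{\xi}^T \mathbf{t}} \int_D e^{i\boldsymbol{\xi}^T (\mathbf{T}(x) - \mathbf{T}(x_0))} d\nu(x) \, d\boldsymbol{\xi}.
\end{equation*}
Since the coordinates of $\mathbf{Z}$ are independent copies of $Z_0 := T_0 - T_0(x_0)$, raising this identity to the $n$-th power and taking expectation produces exactly the same Fourier--characteristic-function expansion used in the proofs of Lemmas \ref{L1} and \ref{L2}, with $T_0(x_j)$ replaced by $Z_0(x_j)$ throughout (and, for the second bound, with $\mathbf{t}$ replaced by $\mathbf{t}-\mathbf{s}$).

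The only input that changes substantively is the lower bound on the conditional variances needed to apply Lemma \ref{Charac-Cov}. Here Lemma \ref{C1} must be replaced by its residual-field counterpart, Corollary \ref{C2'}, which gives
\begin{equation*}
\mathrm{Var}\left(Z_0(x_j) \,|\, Z_0(x_1), \ldots, Z_0(x_{j-1})\right) \geq K_{2,2}^{\prime} \min_{0 \leq k \leq j-1} \rho_\alpha^2(d_{\mathbb{S}^2}(x_j, x_k)).
\end{equation*}
Two features of this bound deserve notice: first, it is already nontrivial at $j=1$, yielding $\mathrm{Var}(Z_0(x_1)) \geq K_{2,2}^{\prime}\rho_\alpha^2(d_{\mathbb{S}^2}(x_1, x_0))$, whereas in Lemmas \ref{L1} and \ref{L2} the unit-variance assumption made the $j=1$ factor trivial; and second, the minimum now includes the fixed reference point $x_0$ as an extra ``center.''

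Finally I would carry out the integration over $D^n$ via the same Voronoi-cell plus Jensen's-inequality argument of $(\ref{ineq:mineq3})$ and $(\ref{ineq:Aj})$, but now partitioning $D$ into $j$ cells $\Gamma_0, \Gamma_1, \ldots, \Gamma_{j-1}$ (rather than $j-1$) at each step to account for the extra center $x_0$. The $x_j$-integral then contributes $C \cdot j \cdot (\nu(D)/j)^{\beta/4}$ for the first bound and $C \cdot j \cdot (\nu(D)/j)^{\eta/2}$ for the second, so iterating over $j = 1, \ldots, n$ produces the full factorial $n!$ (in place of $(n-1)!$), giving the combinatorial factors $(n!)^{1 - \beta/4} = (n!)^{d(\alpha-2)/4}$ and $(n!)^{2 - \eta/2}$ respectively. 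Bounding the spatial factors $\nu(D)^{n\beta/4}$ and $\nu(D)^{n\eta/2}$ by $\nu(D(x,r))^{n\beta/4} \leq (\pi r^2)^{n\beta/4}$ and $(\pi r^2)^{n\eta/2}$ yields the claimed $r^{n[4-(\alpha-2)d]/2}$ and $r^{n\eta}$. The only real bookkeeping subtlety is handling the extra Voronoi cell $\Gamma_0$ uniformly (in particular when $x_0$ lies outside $D$), but since the polar-coordinate estimate on each cell depends only on its radius around its own center, the standard estimate applies verbatim.
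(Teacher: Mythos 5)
Your proposal follows essentially the same route as the paper's own proof: pass to the residual field $\mathbf{Z}(x)=\mathbf{T}(x)-\mathbf{T}(x_0)$, note $L(\mathbf{t}+\mathbf{T}(x_0),D)=L_{\mathbf{Z}}(\mathbf{t},D)$, replace Lemma \ref{C1} by Corollary \ref{C2'} so the conditional-variance minimum ranges over $0\le k\le j-1$ (picking up $x_0$ as an extra center and producing a nontrivial $j=1$ factor $1/[\rho_\alpha(d_{\mathbb{S}^2}(x_1,x_0))]^d$), and then run the same Voronoi-cell plus Jensen integration with $j$ cells at step $j$, turning $(n-1)!$ into $n!$ and trading a factor $\nu(D)$ for $\nu(D)^{\beta/4}$, which gives exactly the stated exponents $(n!)^{d(\alpha-2)/4}r^{n\beta/2}$ and $(n!)^{2-\eta/2}r^{n\eta}$. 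Your remark that the polar estimate on $\Gamma_0$ is unaffected by whether $x_0\in D$ is also correct — the decreasing rearrangement bound used there needs only that $\Gamma_0\subset D$, not star-shapedness about $x_0$ — so the proposal is sound and essentially identical to the paper's argument.
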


\begin{proof}
For any points $x_{1},...,x_{n}\in \mathbb{S}^{2},$ let $\mathbf{Z}(x_{j})=%
\mathbf{T}( x_{j}) -\mathbf{T}( x_{0}) ,$ $j=1,...,n$, we have 
\begin{eqnarray*}
\lefteqn{ \det \mathrm{Cov}\left[ Z_{0}\left( x_{1}\right) ,...,Z_{0}(x_{n}) %
\right] } \\
&=&\mathrm{Var}( Z_{0}( x_{1}) ) \prod\limits_{j=2}^{n}\mathrm{Var}( Z_{0}(
x_{j}) |Z_{0}( x_{1}),...,Z_{0}( x_{j-1}) ) \\
&\geq & K_{2,2}( K_{2,2}^{\prime }) ^{( n-1) /2}\rho_{\alpha }( x_{1},x_{0})
\prod\limits_{j=2}^{n}\min_{0\leq i\leq j-1}\rho _{\alpha }( x_{j},x_{i}) .
\end{eqnarray*}
in view of Corollary \ref{C2'}. Let $L_{\mathbf{Z}}( \mathbf{t},D) $ be the
local time of $\mathbf{Z}$ at $\mathbf{t}$ in $D,$ and recall (\ref{ELeq})
and (\ref{ineq:mineq3}), then we can obtain that 
\begin{eqnarray*}
\lefteqn{\mathbb{E}\left\{ \left[ L_{\mathbf{Z}}( \mathbf{t},D) \right]%
^{n}\right\} } \\
&\leq& \left( K_{2,2}^{^{\prime }}/K_{2,2}\right) ^{-1/2}(2\pi ) ^{-nd/2}(
K_{2,2}^{\prime }) ^{-nd/2} \\
&&\times \int_{D}\left[ \int_{D^{n-1}}\frac{d\nu ( x_{1}) \cdots d\nu (
x_{n-1}) }{\Pi _{j=2}^{n}\min_{0\leq i\leq j-1} \left[ \rho_{\alpha }( d_{%
\mathbb{S}^{2}}( x_{i},x_{j}) ) \right]^{d}}\right] \frac{d\nu ( x_{1}) }{%
\left[ \rho _{\alpha }(x_{1},x_{0}) \right] ^{d}} \\
&\leq & ( C_{4,1}) ^{n}n!\left( \frac{r^{2}}{n!}\right) ^{n\beta /4},
\end{eqnarray*}%
where $C_{4,1}$ is a positive constant depending on $K_{3,1},K_{2,2}$ and $%
K_{2,2}^{^{\prime }}.$ Likewise, recall (\ref{ineq:Ik1}) and (\ref{mineq2}),
we have 
\begin{eqnarray*}
\lefteqn{\mathbb{E}\left[ L_{\mathbf{Z}}( \mathbf{t},D) -L_{\mathbf{Z}}( 
\mathbf{s},D) \right] ^{n}} \\
&\leq& C_{4,2}\left\Vert \mathbf{t}-\mathbf{s}\right\Vert ^{n\gamma } \\
&& \times \int_{D} \left[ \int_{D^{n-1}}\frac{d\nu \left( x_{n}\right)
\cdots d\nu \left( x_{2}\right) }{\Pi _{j=2}^{n}\min_{0\leq i\leq j-1} \left[
\rho_{\alpha }\left( d_{\mathbb{S}^{2}}\left( x_{i},x_{j}\right) \right) %
\right]^{d+2\gamma }}\right] \frac{d\nu \left( x_{1}\right) }{\left[ \rho
_{\alpha}\left( x_{1},x_{0}\right) \right] ^{d+2\gamma }} \\
&\leq & ( C_{4,3}) ^{n}( n!) ^{2-\eta /2}\left\Vert \mathbf{t}-\mathbf{s}%
\right\Vert ^{n\gamma }r^{n\eta },
\end{eqnarray*}
where $C_{4,2}$ and $C_{4,3}$ are positive constants depending on $%
K_{2,2},K_{2,2}^{\prime },K_{3,2}$ and $\gamma ,d.$ The results in Lemma \ref%
{Lemma:EL+r.v.}\ is then derived by the fact that 
\begin{equation*}
L_{\mathbf{Z}}( \mathbf{t},D) =L( \mathbf{t+T}(x_{0}) ,D) .
\end{equation*}
\end{proof}

Based on Lemma \ref{Lemma:EL+r.v.}, we now follow the similar line as in the
proof of Lemma 2.7 in \cite{Xiao97}, and obtain the results below:

\begin{lemma}
\label{Lemma:PrLT+r.v.} Assume conditions of Theorem \ref{Th1} hold, there
exists a positive constant $K_{4,3},$ $K_{4,4}$ depending on $d,$ $\alpha
,K_{0}$ and $K_{4,4}$ depends on $\gamma $ as well, such that for any open
disk $D=D( x,r) \subset \mathbb{S}^{2}$ with $r\in \left(0,\delta \right) $,
any $u>0,$ $\mathbf{t}\in \mathbb{\mathbb{R}}^{d}$, $x_{0}\in \mathbb{S}^{2}$%
, 
\begin{equation}  \label{Pr:LT+r.v.}
\mathbb{P}\left\{ L( \mathbf{t}+\mathbf{T}( x_{0}) ,D)\geq \frac{K_{4,3}r^{2}%
}{\left[ \rho _{\alpha }( ur) \right] ^{d}}\right\} \leq e^{-1/u^{2}}.
\end{equation}
and 
\begin{equation}  \label{Pr:LTincrem+r.v.}
\mathbb{P}\left\{ \left\vert L( \mathbf{t}+\mathbf{T}(x_{0}) ,D) -L( \mathbf{%
s}+\mathbf{T}( x_{0}),D) \right\vert \geq \frac{K_{4,4}r^{2}\left\Vert 
\mathbf{t}-\mathbf{s}\right\Vert ^{\gamma }}{\left[ \rho _{\alpha }( ur) %
\right]^{d}u^{4\gamma }}\right\} \leq e^{-u^{-2}}.
\end{equation}
\end{lemma}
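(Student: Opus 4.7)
The plan is to obtain both estimates as standard exponential tail bounds derived from the moment inequalities of Lemma \ref{Lemma:EL+r.v.} via Markov's inequality, where the order $n$ of the moment is chosen as a function of $u$.

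For \eqref{Pr:LT+r.v.}, I would apply Markov's inequality to $L(\mathbf{t}+\mathbf{T}(x_{0}),D)^{n}$ for an even integer $n$ and insert the first moment bound of Lemma \ref{Lemma:EL+r.v.}. Since $\rho_{\alpha}(ur) = (ur)^{(\alpha-2)/2}$ and $\beta/2 = 2 - d(\alpha-2)/2$, the factor $r^{n\beta/2}$ in the moment bound exactly cancels the reciprocal of $\bigl(K_{4,3}r^{\beta/2} u^{-d(\alpha-2)/2}\bigr)^{n}$ coming from the threshold. Writing $a = d(\alpha-2)/4$, what remains is a bound of the form
\[
\bigl(K_{4,1}/K_{4,3}\bigr)^{n}\,(n!)^{a}\,u^{2an}.
\]
Using Stirling's bound $n! \leq (n/e)^{n}\,e^{O(\log n)}$ and choosing $n$ to be the largest even integer not exceeding $1/u^{2}$ (we may assume $u$ small, as otherwise the inequality is trivial after enlarging $K_{4,3}$) gives $n u^{2}\leq 1$, whence $(nu^{2}/e)^{an}\leq e^{-an}$. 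Taking $K_{4,3}$ sufficiently large relative to $K_{4,1}$ then makes the overall right-hand side at most $e^{-n}\leq e\cdot e^{-1/u^{2}}$, and a final adjustment of the constant yields the claim.

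For \eqref{Pr:LTincrem+r.v.} the scheme is identical, now using the second, increment, moment estimate of Lemma \ref{Lemma:EL+r.v.}. The powers of $\|\mathbf{t}-\mathbf{s}\|^{n\gamma}$ cancel at once between $\lambda^{-n}$ and the moment bound. The role of the additional $u^{4\gamma}$ factor in the denominator of the threshold is to rebalance the factorial exponent $2-\eta/2$ against the residual powers of $u$ produced by Stirling's formula; combined with the identity $\eta-\beta/2 = -(\alpha-2)\gamma$, the same Markov--plus--Stirling optimization with $n\sim\lfloor 1/u^{2}\rfloor$ delivers an $e^{-cn}$ bound, and taking $K_{4,4}$ large enough yields the required $e^{-u^{-2}}$.

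The argument is a classical template, essentially the spherical analogue of the proof of Lemma 2.7 in \cite{Xiao97}, so no new conceptual ingredient is required. The only genuinely delicate point, and the part most likely to require care, is the exponent bookkeeping in the increment case: one must verify that the $u^{4\gamma}$ correction is calibrated so that the Stirling optimization indeed produces the advertised exponential decay \emph{uniformly} in $r\in(0,\delta)$, controlling the potentially large factor that arises from $\eta-\beta/2<0$. Once this has been checked, the final tuning of the constants $K_{4,3}$ and $K_{4,4}$ is routine.
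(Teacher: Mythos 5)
Your treatment of the first inequality \eqref{Pr:LT+r.v.} coincides with the paper's: Markov's inequality on the $n$th moment from Lemma \ref{Lemma:EL+r.v.}, Stirling's formula, and the choice $n\sim\lfloor 1/u^{2}\rfloor$. The paper phrases the optimization through the sequence $u_{n}=n^{-1/2}$ and then interpolates in $u$; you take $n$ directly as the largest (even) integer below $1/u^{2}$. These are the same argument, and the $r$-powers indeed cancel exactly since both the moment bound and the $n$th power of the threshold contribute $r^{n\beta/2}$.

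For the second inequality \eqref{Pr:LTincrem+r.v.}, however, you defer precisely the step that does not go through, and the gap is real. The increment moment estimate carries $r^{n\eta}$, while the advertised threshold scales in $r$ as $r^{2}/[\rho_{\alpha}(ur)]^{d}=r^{\beta/2}u^{-d(\alpha-2)/2}$, so $\mathbb{E}[|\Delta L|^{n}]/\lambda^{n}$ picks up the factor
\begin{equation*}
r^{\,n(\eta-\beta/2)}=r^{-n(\alpha-2)\gamma},
\end{equation*}
which is $\geq 1$ and diverges as $r\to 0$ (and, for fixed $r<1$, blows up in $n$). The additional $u^{4\gamma}$ in the threshold only shifts the $u$-exponent and cannot absorb a negative power of $r$; no enlargement of $K_{4,4}$ fixes this, so the ``routine tuning of constants'' you invoke is not available. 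What actually balances the bookkeeping is a threshold proportional to $r^{\eta}u^{-(4-\eta)}$, equivalently
\begin{equation*}
\frac{r^{2}\left\Vert \mathbf{t}-\mathbf{s}\right\Vert ^{\gamma }}{\left[\rho_{\alpha}(ur)\right]^{d+2\gamma}\,u^{2}},
\end{equation*}
since then Markov plus Stirling with $n\sim\lfloor 1/u^{2}\rfloor$ yields $(K_{4,2}/K_{4,4})^{n}e^{-n(2-\eta/2)}$ with no residual $r$- or $u$-powers and $2-\eta/2>0$. The paper itself omits the proof of \eqref{Pr:LTincrem+r.v.} ``as similar,'' so the discrepancy is not caught there either; it is most plausibly a typo in the statement, but in a blind proof you must carry the exponent computation to the end rather than flag it and move on, since that is exactly where the stated form fails.
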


\begin{proof}
Let 
\begin{equation*}
\Lambda =\frac{L( \mathbf{t}+\mathbf{T}( x_{0}) ,D) }{r^{2}},\ u_{n}=\frac{1%
}{\sqrt{n}}
\end{equation*}
with $n\in\mathbb{N}^{+}.$ Then by Chebyshev's inequality and Lemma \ref%
{Lemma:EL+r.v.}, we have 
\begin{eqnarray*}
\mathbb{P}\left\{ \Lambda \geq \frac{K_{4,3}}{\left[ \rho _{\alpha
}\left(u_{n}r\right) \right] ^{d}}\right\} &\leq &\frac{\mathbb{E}\left[
\Lambda^{n}\right] \rho _{\alpha }^{n}( u_{n}r) }{( K_{3,4})^{n}}( n!) ^{%
\frac{d}{4}( \alpha -2) }r^{\frac{n}{2}\beta } \\
&\leq &\left( \frac{K_{4,1}}{K_{4,3}}\right) ^{n}\left( n!\right) ^{\frac{d}{%
4}\left( \alpha -2\right) }u_{n}^{\frac{n}{2}d\left( \alpha -2\right) }. \\
&\leq &\exp \left\{ -\left( \frac{\left( \alpha -2\right) d}{8}-\ln \frac{%
K_{4,1}}{K_{4,3}}\right) n\right\} \\
&\leq &\exp \left\{ -2/u_{n}^{2}\right\} ,
\end{eqnarray*}
where the last inequality follows from Stirling's formula, and the constants
satisfy $K_{4,3}=K_{4,1}\exp \left\{ 2-\frac{\left( \alpha -2\right) d}{8}%
\right\} .$ Now for any $u>0$ small enough, there exists $n\in\mathbb{N}%
^{+}, $ such that $u_{n+1}\leq u<u_{n},$ and 
\begin{eqnarray*}
\mathbb{P}\left\{ \Lambda \geq \frac{K_{4,3}}{\left[ \rho _{\alpha
}\left(ur\right) \right] ^{d}}\right\} &\leq & \mathbb{P}\left\{ \Lambda
\geq \frac{K_{4,3}}{\left[ \rho _{\alpha }\left( u_{n}r\right) \right] ^{d}}%
\right\} \\
&\leq& \exp \left\{ -2/u_{n}^{2}\right\} \leq \exp \left\{ -u^{-2}\right\} .
\end{eqnarray*}
Hence inequality $\left( \ref{Pr:LT+r.v.}\right) $ is derived.

The proof of estimation $\left( \ref{Pr:LTincrem+r.v.}\right) $ is similar
to the argument above and we omit it here. Thus the proof of Lemma \ref%
{Lemma:PrLT+r.v.} is completed.
\end{proof}

Now we sketch a proof for Theorem \ref{Th2}.

\begin{proof}[\textbf{Proof of Theorem \protect\ref{Th2}}]
The proof for the upper bounds in the inequalies $\left( \ref{ineq:Local
Holder of LT}\right) $ is based on a chaining argument and quite similar to
the proof of Theorem 1.1 and 1.2 in \cite{Xiao97} section 3 by replacing the
notations $X(t),$ $B(\tau ,2^{-n})$ with ones of $\mathbf{T}\left( x\right) ,
$ $D\left( x_{0},2^{-n}\right) $ for some $x_{0}\in \mathbb{S}^{2},$ and
Lemma 2.7, 3.1 in \cite{Xiao97} with Lemmas \ref{LemTupper} and \ref%
{Lemma:PrLT+r.v.} in this paper.

For the lower bounds in the inequities $(\ref{ineq:Local Holder of LT}),$ we
first let $\mathbf{I}$ be the closure of the set $\mathbf{T}(D)\mathbf{=}%
\left\{ \mathbf{T}(x),x\in D\right\} $ for any disk $D=D(z,r)\subset \mathbb{%
S}^{2}.$ Recall the definition of local time $\left( \ref{def:Local time}%
\right) $ or formula $\left( \ref{eq:LT-occup density}\right) ,$ we have for
any $\omega \in \Omega ,$ 
\begin{eqnarray}
\nu (D) &=&\mu _{D}(I,\omega )=\int_{\mathbf{I}}L(\mathbf{t},D,\omega )d%
\mathbf{t}  \notag \\
&\leq &L^{\ast }(D,\omega )\cdot \frac{\pi ^{d/2}}{\Gamma (d/2+1)}\left\vert
\sup_{x,y\in D}\left\Vert \mathbf{T}(x)-\mathbf{T}(y)\right\Vert \right\vert
^{d}.  \label{ineq:LT-mod}
\end{eqnarray}%
That is, for any $z\in \mathbb{S}^{2},$ 
\begin{eqnarray*}
\liminf_{r\rightarrow 0}\frac{L^{\ast }(D(z,r))}{\phi _{1}\left( r\right) }
&\geq &C_{4,4}\cdot \liminf_{r\rightarrow 0}\left\{ \frac{\rho _{\alpha }(r/%
\sqrt{\log \log r^{-1}})}{\sup_{x,y\in D(z,r)}\left\Vert \mathbf{T}\left(
x\right) -\mathbf{T}(y)\right\Vert }\right\} ^{d} \\
&\geq &C_{4,4}\left\{ \lim_{r\rightarrow 0}\sup_{x,y\in D(z,r)}\frac{%
\left\Vert \mathbf{T}(x)-\mathbf{T}(y)\right\Vert }{\rho _{\alpha }(r/\sqrt{%
\log \log r^{-1}})}\right\} ^{-d} \\
&=&C_{4,4}K_{5,3}^{-d},
\end{eqnarray*}%
in view of Proposition \ref{Prop:Loc non-dif} in Section \ref{Sec:non-diff},
which derives the lower bounds in the equalities $\left( \ref{ineq:Local
Holder of LT}\right) $ in Theorem \ref{Th2}. The positive constant $C_{4,4}$
depends only on $d.$ The proof is then completed.
\end{proof}

\section{Modulus of non-differentiability \label{Sec:non-diff}}

In this section, we establish local and global Chung's law of the iterated
logarithm, or we say, the mudulus of non-differentiability of the random
field $\mathbf{T}$, which are the essential of proving the lower bound of
maximum local time in Theorem \ref{Th2}.

Before giving the main results, we first introduce the following
band-limited random field. For any two integers $1\leq L<U\leq \infty ,$ we
define a random field as follows: 
\begin{equation*}
T_{0}^{L,U}( x) =\sum_{\ell =L}^{U}\sum_{m=-\ell }^{\ell }a_{\ell m}Y_{\ell
m}( x) ,\ x\in \mathbb{S}^{2}.
\end{equation*}
Observe that $T_{0}^{L,U}\left( x\right) $ and $T_{0}^{L^{\prime
},U^{\prime}}\left( x\right) $ are independent for $L<U<L^{\prime
}<U^{\prime }$ in view of the orthogonality properties of the Fourier
components of the field $T_{0}(x)$ and the assumption of Gaussianity.

Meantime, let $T_{0}^{\Delta }$ be the random field defined by 
\begin{equation*}
T_{0}^{\Delta }=T_{0}-T_{0}^{L,U}.
\end{equation*}
For any $r$ small, take $L=\left[ r^{-1}( B( r) )^{-\kappa _{1}}\right] $
and $U=\left[ r^{-1}( B( r) )^{1-\kappa _{1}}\right] ,$ where $B( r) $ is a
function such as $( \log \log r^{-1}) ^{\kappa _{2}}$ or $\left( \log
r^{-1}\right)^{\kappa _{2}}$ and the constants $\kappa _{1},\kappa _{2}$ are
to be determined. Here $[ \cdot ] $ denotes integer part as usual. Then we
have the following approximation for $T_{0}^{\Delta }:$

\begin{lemma}
\label{LemTtail} Under the same condition as in Theorem \ref{Th1}, there
exist positive constants $K_{5,1}$ and $K_{5,2}$ depending only on $\alpha $
and $K_{0},$ such that for any $0<r<\delta ,$ $0<\kappa _{1}\leq \frac{%
\alpha }{2}-1$ and 
\begin{equation*}
u > K_{5,1}( B( r) ) ^{-\kappa _{1}( 2-\frac{\alpha }{2}) }\sqrt{\log B( r) }%
\ r^{\frac{\alpha }{2}-1},
\end{equation*}
we have 
\begin{equation*}
\mathbb{P}\left\{ \sup_{x,y\in D( z,r) }\left\vert T_{0}^{\Delta}( x)
-T_{0}^{\Delta }( y) \right\vert \geq u\right\} \leq \exp \left( -\frac{1}{%
K_{5,2}}\frac{\left( B\left( r\right) \right) ^{\kappa _{1}( 4-\alpha )
}u^{2}}{r^{\alpha -2}}\right) .
\end{equation*}
\end{lemma}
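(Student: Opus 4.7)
The strategy is to apply Talagrand's concentration bound (Lemma~\ref{DudleyUB}) to the residual Gaussian field $T_{0}^{\Delta}$ on $D(z,r)$. Since $T_{0}^{L,U}$ and $T_{0}^{\Delta}$ are independent, $T_{0}^{\Delta}$ is itself a zero-mean Gaussian field, so I only need to estimate the canonical metric $d_{T_{0}^{\Delta}}(x,y)=\sqrt{\mathbb{E}[(T_{0}^{\Delta}(x)-T_{0}^{\Delta}(y))^{2}]}$, its diameter $\bar{d}$ over $D(z,r)$, and the entropy integral.

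\textbf{Step 1 (Variogram).} Using the Schoenberg expansion and independence of the $a_{\ell m}$,
\[
\mathbb{E}[(T_{0}^{\Delta}(x)-T_{0}^{\Delta}(y))^{2}]=2\sum_{\ell\notin[L,U]}C_{\ell}\frac{2\ell+1}{4\pi}\bigl(1-P_{\ell}(\langle x,y\rangle)\bigr).
\]
Writing $\theta:=d_{\mathbb{S}^{2}}(x,y)$ and combining Condition (A) ($C_{\ell}\asymp\ell^{-\alpha}$) with the elementary inequality $0\le 1-P_{\ell}(\cos\theta)\le\min(\ell^{2}\theta^{2},2)$ and $\alpha\in(2,4)$, I will show
\[
\sum_{\ell<L}\lesssim L^{4-\alpha}\theta^{2}\quad\text{and}\quad\sum_{\ell>U}\lesssim\begin{cases}U^{2-\alpha},&\theta\ge 1/U,\\ \theta^{\alpha-2},&\theta<1/U.\end{cases}
\]

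\textbf{Step 2 (Diameter).} The choices $L\asymp r^{-1}B(r)^{-\kappa_{1}}$ and $U\asymp r^{-1}B(r)^{1-\kappa_{1}}$ yield $1/U\le r\le 1/L$. For $\theta\le 2r$,
\[
d_{T_{0}^{\Delta}}^{2}(x,y)\lesssim L^{4-\alpha}r^{2}+U^{2-\alpha}=r^{\alpha-2}\bigl(B(r)^{-\kappa_{1}(4-\alpha)}+B(r)^{-(\alpha-2)(1-\kappa_{1})}\bigr).
\]
The hypothesis $\kappa_{1}\le(\alpha-2)/2$ is equivalent to $\kappa_{1}(4-\alpha)\le(\alpha-2)(1-\kappa_{1})$, so the first summand dominates and
\[
\bar{d}^{2}\lesssim r^{\alpha-2}\,B(r)^{-\kappa_{1}(4-\alpha)}.
\]

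\textbf{Step 3 (Entropy integral).} Inverting the variogram bound,
\[
N_{d_{T_{0}^{\Delta}}}(D(z,r),\varepsilon)\lesssim\begin{cases}r^{2}\varepsilon^{-4/(\alpha-2)},&\varepsilon<U^{(2-\alpha)/2},\\ \bar{d}^{2}\varepsilon^{-2},&\varepsilon\ge U^{(2-\alpha)/2}.\end{cases}
\]
On the upper range, $\log N(\varepsilon)\le\log(\bar{d}^{2}U^{\alpha-2})=[(\alpha-2)-2\kappa_{1}]\log B(r)\le(\alpha-2)\log B(r)$. On the lower range, the substitution $\varepsilon=U^{(2-\alpha)/2}e^{-t}$ shows the contribution is $\lesssim U^{(2-\alpha)/2}\sqrt{\log B(r)}$, which is dominated by $\bar{d}\sqrt{\log B(r)}$ since $U^{(2-\alpha)/2}/\bar{d}=B(r)^{-((\alpha-2)-2\kappa_{1})/2}\le 1$. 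Combining,
\[
\int_{0}^{\bar{d}}\sqrt{\log N_{d_{T_{0}^{\Delta}}}(D(z,r),\varepsilon)}\,d\varepsilon\lesssim\bar{d}\,\sqrt{\log B(r)}.
\]

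\textbf{Step 4 (Applying Talagrand).} Apply Lemma~\ref{DudleyUB} separately to $\pm(T_{0}^{\Delta}(\cdot)-T_{0}^{\Delta}(z))$ on $D(z,r)$. If $K_{5,1}$ is chosen so large that $u\ge K_{5,1}\bar{d}\sqrt{\log B(r)}$ forces $u/(4K_{2,5})$ to exceed the entropy-integral contribution, then a union bound over the two signs gives
\[
\mathbb{P}\Bigl\{\sup_{x,y\in D(z,r)}|T_{0}^{\Delta}(x)-T_{0}^{\Delta}(y)|\ge u\Bigr\}\le 2\exp\!\left(-\frac{u^{2}}{16K_{2,5}^{2}\,\bar{d}^{2}}\right),
\]
which yields the claimed bound after substituting $\bar{d}^{2}\lesssim r^{\alpha-2}B(r)^{-\kappa_{1}(4-\alpha)}$ and absorbing all absolute constants into $K_{5,2}$.

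\textbf{Main obstacle.} The subtle point is the coupled analysis of Steps~2 and~3: although deleting the middle band $[L,U]$ reduces the total variance, the residual field still inherits the full roughness $\theta^{(\alpha-2)/2}$ at very small scales through its high-frequency tail, so the small-$\varepsilon$ entropy is identical to that of $T_{0}$. Checking that this high-frequency contribution to the entropy integral is nonetheless dominated by the low-frequency-controlled contribution at larger $\varepsilon$, and that the low-frequency term simultaneously controls the diameter, both rely essentially on the sharp constraint $\kappa_{1}\le\alpha/2-1$.
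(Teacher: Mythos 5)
Your proposal is correct and follows essentially the same route as the paper's proof: bound the canonical metric of $T_0^{\Delta}$ via the Schoenberg expansion (your elementary bound $1-P_{\ell}(\cos\theta)\lesssim\min(\ell^{2}\theta^{2},1)$ playing the role of the paper's Hilb's-asymptotics lemma), use $\kappa_1\le\alpha/2-1$ to show the low-frequency term dominates the diameter, estimate the entropy integral as $\lesssim\bar d\sqrt{\log B(r)}$, and close with Talagrand's concentration bound (Lemma~\ref{DudleyUB}). The only stylistic difference is your two-scale split of the entropy integral; the paper instead uses the single inherited entropy bound $N\lesssim r^{2}\varepsilon^{-4/(\alpha-2)}$ and exploits that the upper limit of integration is the reduced diameter $\bar d$—both give the same $\bar d\sqrt{\log B(r)}$ conclusion.
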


\begin{proof}
Like in many other arguments in this paper, we start by introducing a
suitable Gaussian metric $d_{T_{\Delta }}$ defined on $D( z,r)\subset 
\mathbb{S}^{2}$ by 
\begin{equation*}
d_{T_{0}^{\Delta }}( x,y) :=\left[ \mathbb{E}\left\vert T_{0}^{\Delta }( x)
-T_{0}^{\Delta }( y) \right\vert ^{2}\right] ^{1/2}.
\end{equation*}
Once again, due to the fact that $d_{\mathbf{T}^{\Delta }}( x,y)\leq \sqrt{%
K_{2,1}}\rho _{\alpha }( x,y) ,$ a simple metric entropy argument yields 
\begin{equation*}
N_{d_{T_{0}^{\Delta }}}( D( z,r) ,\varepsilon ) \leq C_{5,1}\frac{r^{2}}{%
\epsilon ^{4/( \alpha -2) }},
\end{equation*}
with $C_{5,1}$ depending on $K_{2,1}.$ More precisely, recall 
\begin{equation*}
d_{T_{0}^{\Delta }}^{2}\left( x,y\right) =\left(
\sum_{\ell=0}^{L-1}+\sum_{\ell =U+1}^{\infty }\right) \frac{2\ell +1}{4\pi }%
C_{\ell}\left\{ 1-P_{\ell }(\cos \theta )\right\} .
\end{equation*}
where $\theta =d_{\mathbb{S}^{2}}\left( x,y\right) .$ Let $0<\theta <r,$ by
Lemma \ref{C1'} in the Appendix, we obtain 
\begin{eqnarray*}
d_{\mathbf{T}^{\Delta }}^{2}( x,y) &\leq & K_{A}(L^{4-\alpha }\theta
^{2}+U^{2-\alpha }) \\
&\leq & K_{A}\left[ B( r) ^{-\kappa _{1}( 4-\alpha )}+B( r) ^{-( 1-\kappa
_{1}) ( \alpha -2) }\right] r^{\alpha -2} \\
&\leq & K_{A}B( r) ^{-\kappa _{1}( 4-\alpha )}r^{\alpha -2}:=\left\vert h(
r) \right\vert ^{2}r^{\alpha -2},
\end{eqnarray*}
where 
\begin{equation*}
h( r) =:\sqrt{K_{A}}B( r) ^{-\frac{\kappa _{1}}{2}( 4-\alpha ) }.
\end{equation*}
Hence, if we let 
\begin{equation*}
\overline{d}:=\sup \left\{ d_{T_{0}^{\Delta }}( x,y) :x,y\in D( z,r)
\right\} ,
\end{equation*}
which obviously have $\overline{d}\leq h( r) r^{\frac{\alpha }{2}-1}$, then 
\begin{equation*}
\begin{split}
& \int_{0}^{\overline{d}}\sqrt{\log N_{d_{T_{0}^{\Delta }}}( D(z,r)
,\epsilon ) }\ d\epsilon \leq \int_{0}^{h( r) r^{\frac{\alpha }{2}-1}}\sqrt{%
\log C_{4,1}\frac{r^{2}}{\epsilon ^{4/(\alpha -2) }}}\ d\epsilon \\
& \leq \frac{2}{\sqrt{\alpha -2}}\sqrt{K_{2,1}}r^{\frac{\alpha }{2}-1}\int_{%
\sqrt{\log \frac{\sqrt{K_{2,1}}}{h\left( r\right) }}}^{+\infty }ud\left(
-e^{-u^{2}}\right) \\
& \leq \frac{4h\left( r\right) }{\sqrt{\alpha -2}}\sqrt{\log \frac{\sqrt{%
K_{2,1}}}{h\left( r\right) }}\ r^{\frac{\alpha }{2}-1}.
\end{split}%
\end{equation*}
By exploiting Lemma \ref{DudleyUB}, we immediately derive that, for any 
\begin{equation*}
u > C_{5,2}( B( r) ) ^{-\kappa _{1}( 2-\alpha /2) }\sqrt{\log B( r) }\
r^{\alpha /2-1}
\end{equation*}
with $C_{5,2}>0$ depending on $K_{2,5},K_{A},\alpha ,\kappa _{1},$ it holds
that 
\begin{equation*}
\mathbb{P}\left\{ \sup_{x,y\in D( z,r) }\left\vert T_{0}^{\Delta}( x)
-T_{0}^{\Delta }( y) \right\vert \geq u\right\} \leq \exp \left( -\frac{u^{2}%
}{4K_{2,5}^{2}\left\vert h( r)\right\vert ^{2}r^{\alpha -2}}\right)
\end{equation*}%
Letting $K_{5,1}=2K_{2,5}C_{5,2}$ and $K_{5,2}=4K_{2,5}^{2}K_{A},$ the proof
is then completed.
\end{proof}

Now let us focus on the Chung's law of the iterated logarithm.

\begin{proposition}
\label{Prop:Loc non-dif} Under the conditions of Theorem \ref{Th1}, there
exists positive constants $K_{4,2}$ such that for any $z\in \mathbb{S}^{2}$
with 
\begin{equation}  \label{eq:loc non-diff}
\lim_{r\rightarrow 0}\sup_{x,y\in D( z,r) }\frac{\left\Vert \mathbf{T}( x) -%
\mathbf{T}( y) \right\Vert }{\rho_{\alpha }\left( r/\sqrt{\log \log r^{-1}}%
\right) } =K_{5,3}.
\end{equation}
\end{proposition}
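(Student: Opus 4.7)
The plan is to read \eqref{eq:loc non-diff} as a Chung-type law of the iterated logarithm for the oscillation of $\mathbf{T}$ on $D(z,r)$, so that the quantity on the left is really the almost-sure value of the $\liminf$ as $r\to 0$ (the corresponding $\limsup$ is $+\infty$ by the uniform modulus of continuity in \cite{LanMarXiao}). The proof then follows the familiar two-sided Chung skeleton: an upper bound on the $\liminf$ via frequency-independence of the band-limited parts of $T_{0}$ and the second Borel--Cantelli lemma, a lower bound via a small-ball upper estimate and the first Borel--Cantelli lemma, and a zero-one law identifying the common value $K_{5,3}\in(0,\infty)$.

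\textbf{Upper bound on the $\liminf$.} Pick a rapidly decreasing sequence $r_{n}=\exp(-n^{2})$ and apply the decomposition $T_{0}=T_{0}^{L_{n},U_{n}}+T_{0}^{\Delta}$ with $L_{n},U_{n}$ as above Lemma \ref{LemTtail}, $B(r_{n})=\log\log r_{n}^{-1}$, and a fixed $\kappa_{1}\in(0,\alpha/2-1)$. For $r_{n}$ sparse enough the frequency intervals $[L_{n},U_{n}]$ are pairwise disjoint, so by orthogonality of the spherical harmonics and Gaussianity the fields $T_{0}^{L_{n},U_{n}}$ are mutually independent. Applying Lemma \ref{Lem:Small Ball P} to $T_{0}^{L_{n},U_{n}}$ at scale $r_{n}$ with $\epsilon_{n}=c\,\rho_{\alpha}(r_{n}/\sqrt{\log\log r_{n}^{-1}})$ yields $\mathbb{P}(E_{n})\geq(\log r_{n}^{-1})^{-K_{2,8}/c^{4/(\alpha-2)}}$, where $E_{n}$ is the small-oscillation event on $D(z,r_{n})$. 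For $c$ large enough $\sum_{n}\mathbb{P}(E_{n})=\infty$, so by independence and the second Borel--Cantelli lemma $E_{n}$ occurs infinitely often. The remainder $T_{0}^{\Delta}$ is controlled by Lemma \ref{LemTtail}: its oscillation on $D(z,r_{n})$ is smaller than $\epsilon_{n}$ along the subsequence by a one-sided Borel--Cantelli argument. Passing from the scalar $T_{0}$ to the vector $\mathbf{T}$ through the $d$ independent coordinates then gives $\liminf\leq K$ almost surely.

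\textbf{Lower bound on the $\liminf$.} Take $r_{n}=\theta^{n}$ for some fixed $\theta\in(0,1)$. The key step is to upgrade Lemma \ref{Lem:Small Ball P} to a matching upper bound
\begin{equation*}
\mathbb{P}\Bigl\{\sup_{x,y\in D(z,r)}|T_{0}(x)-T_{0}(y)|\leq\epsilon\Bigr\}\leq\exp\Bigl(-c\,\frac{r^{2}}{\epsilon^{4/(\alpha-2)}}\Bigr),
\end{equation*}
which I would prove by extracting a maximal $d_{T_{0}}$-separated net in $D(z,r)$ of cardinality $N\asymp r^{2}/\epsilon^{4/(\alpha-2)}$ (possible thanks to the two-sided variogram bound in Lemma \ref{C1}), ordering its points, invoking SLND (\ref{ineq:SLND}) to lower-bound each successive conditional standard deviation by a constant multiple of $\epsilon$, and using Gaussian anti-concentration to bound each successive conditional probability of an $\epsilon$-small increment by a constant strictly less than one. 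Choosing $\epsilon_{n}=c'\rho_{\alpha}(r_{n}/\sqrt{\log\log r_{n}^{-1}})$ with $c'$ sufficiently small makes the resulting probabilities summable; the first Borel--Cantelli lemma together with the monotonicity of $r\mapsto\sup_{x,y\in D(z,r)}\|\mathbf{T}(x)-\mathbf{T}(y)\|$ then extends the lower bound from the sequence $r_{n}$ to all $r\to 0$.

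\textbf{Identification and main obstacle.} A Kolmogorov zero-one law on the tail $\sigma$-algebra $\bigcap_{N}\sigma\{a_{\ell m}:\ell\geq N\}$ shows that the $\liminf$ is a.s.\ a deterministic constant (since truncating finitely many low frequencies only adds a smooth, hence negligible, term to the oscillation), and the two-sided bounds above pin it to a finite positive value $K_{5,3}$. The main obstacle is the small-ball upper bound in Step 2: Lemmas \ref{LemTupper} and \ref{Lem:Small Ball P} only supply the complementary estimates, so the anti-concentration inequality has to be derived directly from (\ref{ineq:SLND}). A secondary technical nuisance is matching the constants in Steps 1 and 2 if one insists on pinning $K_{5,3}$ explicitly; for the application to Theorem \ref{Th2}, however, only the fact that $0<K_{5,3}<\infty$ is needed, and that is already delivered by Step 1.
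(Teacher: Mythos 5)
Your Step~1 (upper bound on the $\liminf$) is essentially the paper's argument: a band-limited decomposition $T_0 = T_0^{L_k,U_k} + T_0^{\Delta}$ along a rapidly decreasing sequence $r_k$, mutual independence of the blocks $T_0^{L_k,U_k}$ by spectral disjointness, the small-ball lower estimate of Lemma~\ref{Lem:Small Ball P} applied to the band-limited piece (whose canonical metric is dominated by $d_{T_0}$), Lemma~\ref{LemTtail} together with the first Borel--Cantelli lemma to kill the remainder $T_0^{\Delta}$, and the second Borel--Cantelli lemma for the independent blocks. You also correctly read the ``$\lim$'' in~\eqref{eq:loc non-diff} as a $\liminf$; since the $\limsup$ of the same ratio is $+\infty$ by the exact modulus of continuity, that is the only sensible interpretation, and it is what the paper in fact proves.

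Where you genuinely diverge is Step~2, the lower bound on the $\liminf$. You propose to establish a small-ball \emph{upper} estimate
$\mathbb{P}\bigl\{\sup_{x,y\in D(z,r)}|T_0(x)-T_0(y)|\le\epsilon\bigr\}\le\exp\bigl(-c\,r^2/\epsilon^{4/(\alpha-2)}\bigr)$
from SLND and Gaussian anti-concentration on a separated net, then feed it into the first Borel--Cantelli lemma along $r_n=\theta^n$. The paper never proves such a bound. Instead it takes a shortcut: the occupation-density inequality~\eqref{ineq:LT-mod},
$\nu(D)\le C_d\,L^{\ast}(D)\,\bigl(\sup_{x,y\in D}\|\mathbf{T}(x)-\mathbf{T}(y)\|\bigr)^{d}$,
converts the already-established \emph{upper} bound $\limsup_{r\to0}L^{\ast}(D(z,r))/\phi(r)\le K_{3}$ (proved by chaining, independently of this proposition) directly into $\liminf_{r\to0}\sup_{x,y\in D(z,r)}\|\mathbf{T}(x)-\mathbf{T}(y)\|/\rho_{\alpha}(r/\sqrt{\log\log r^{-1}})\ge(\pi/(C_dK_3))^{1/d}>0$. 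Your route is the classical self-contained Chung scheme; the paper's route is shorter and buys the lower bound for free from the local-time machinery. Both are valid, and the identification of the common constant via a Gaussian zero-one law (your tail $\sigma$-algebra argument, the paper's appeal to Marcus--Rosen Lemma~7.1.1/7.1.6) is the same in substance.

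On the remaining gap: as you note, the small-ball upper bound in your Step~2 is only sketched, and there is no analogue of it anywhere in the paper --- Lemmas~\ref{LemTupper} and~\ref{Lem:Small Ball P} are the complementary estimates. The sketch (maximal $d_{T_0}$-separated net of cardinality $\asymp r^2/\epsilon^{4/(\alpha-2)}$ by the two-sided variogram~\eqref{ineq:variogram}; SLND~\eqref{ineq:SLND}, or rather its centered variant Corollary~\ref{C2'}, to lower-bound successive conditional standard deviations by $c\epsilon$; anti-concentration to bound each successive conditional probability by a constant $q<1$) is the standard Monrad--Rootz\'en/Talagrand argument and should go through, but one must be careful that the ordering of the net is chosen so that SLND controls the variance conditioned on the \emph{previous} net points, and that the conditioning is on the increments $T_0(y_j)-T_0(y_1)$ rather than the values themselves. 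As written this step is an outline, not a proof; if you wish to avoid it entirely, you can substitute the paper's local-time shortcut, which requires no new lemma.
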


\begin{proof}
Due to Lemma 7.1.1 in Marcus and Rosen \cite{MRbook} and the fact that $%
\left\Vert \mathbf{T}( x) -\mathbf{T}( y) \right\Vert =\sqrt{d}%
|T_{0}(x)-T_{0}(y)|,$ we only need to prove the upper and lower bounds of
the following form: there exist positive and finite constants $C_{5,3}$ and $%
C_{5,4}$ such that 
\begin{equation}  \label{ineq:Llocmod}
\lim_{r\rightarrow 0}\sup_{x,y\in D( z,r) } \frac{|T_{0}(x)-T_{0}(y)|}{\rho
_{\alpha }\left( r/\sqrt{\log \log r^{-1}}\right) } \geq C_{5,3},\ \ 
\hbox{
a.s.}
\end{equation}
and 
\begin{equation}  \label{ineq:Ulocmod}
\lim_{r\rightarrow 0}\sup_{x,y\in D( z,r) }\frac{|T_{0}(x)-T_{0}(y)|}{\rho
_{\alpha }\left( r/\sqrt{\log \log r^{-1}}\right) } \leq C_{5,4},\ \ 
\hbox{
a.s. }
\end{equation}%
which implies $( \ref{eq:loc non-diff}) $\ with $K_{5,3}\in \lbrack
C_{5,3},C_{5,4}].$

Recall $\left( \ref{ineq:LT-mod}\right) $ and the upper bound of $\left( \ref%
{ineq:Local Holder of LT}\right) $ in Theorem \ref{Th2}$,$ we have for any $%
z\in \mathbb{S}^{2},$ 
\begin{eqnarray*}
&&\liminf_{r\to 0}\sup_{x,y\in D( z,r) }\frac{\left\Vert \mathbf{T}( x) -%
\mathbf{T}( y)\right\Vert }{\rho _{\alpha }\left( r/\sqrt{\log \log r^{-1}}%
\right) } \\
&\geq &C_{4,4}\left[ \limsup_{r\to 0}\frac{L^{\ast}( D( z,r) ) }{\phi _{1}(
r) }\right]^{-1} \geq \frac{C_{4,4}}{K_{3}}.
\end{eqnarray*}%
where $\phi _{1}$ is defined in $\left( \ref{psi1}\right) .$ One can verify
(cf. Lemma 7.1.6 in \cite{MRbook}) that this implies $\left( \ref%
{ineq:Llocmod}\right) .$

Now let us focus on the Proof of $\left( \ref{ineq:Ulocmod}\right) $. Let $%
B( r) =\left( \log \log r^{-1}\right) ^{\kappa _{2}}$ for any $r\in \left(
0,\delta \right) $ and $\kappa _{2}>0$ to be determined. Now we choose $%
r_{k}=( 2\log k) ^{-k},k=1,2,\cdots ,$ and $L_{k}=\left[ B\left(
r_{k}\right) ^{-\kappa _{1}}r_{k}^{-1}\right] $ as well as $U_{k}=\left[
B\left( r_{k}\right) ^{1-\beta }r_{k}^{-1}\right] ,$ where $\beta \in (0,%
\frac{\alpha }{2}-1]$ and $\left[ \cdot \right] $ denotes the integer part
as before. Obviously $L_{k}<U_{k}<L_{k+1}<U_{k+1}$ for any $k\in \mathbb{N}%
^{+}.$ We would like to prove that for some constant $C_{5,5}>0,$ 
\begin{equation}  \label{eq:UTmain}
\dsum\limits_{k=1}^{\infty }\mathbb{P}\left\{ \sup_{x,y\in D(z,r_{k})
}\left\vert T_{0}^{L_{k},U_{k}}( x)-T_{0}^{L_{k},U_{k}}( y) \right\vert \leq
C_{5,5}\rho _{\alpha}\left( \frac{r_{k}}{\sqrt{\log \log r_{k}^{-1}}}\right)
\right\} =\infty .
\end{equation}%
\label{UTmain} Thus, due to the independence of $T_{0}^{L_{k},U_{k}}$ for
different $k^{\prime }s,$ we have 
\begin{equation}
\limsup_{k\to\infty}\sup_{x,y\in D(z,r_{k}) }\left\vert T_{0}^{L_{k},U_{k}}(
x)-T_{0}^{L_{k},U_{k}}( y) \right\vert \leq C_{5,5}\rho _{\alpha}\left( 
\frac{r_{k}}{\sqrt{\log \log r_{k}^{-1}}}\right) ,\ a.s.
\end{equation}
in view of the Borel-Cantelli Lemma. The equation $\left( \ref{eq:UTmain}%
\right) $ can be derived by the fact that 
\begin{equation*}
d_{T_{0}^{L_{k},U_{K}}}=\sqrt{\mathbb{E}\left\vert T_{0}^{L_{k},U_{k}}(x)
-T_{0}^{L_{k},U_{k}}( y) \right\vert ^{2}} \leq d_{T_{0}},
\end{equation*}%
and thus, similar to the argument as in the proof of Lemma \ref{Lem:Small
Ball P}, we have for some constant $C_{4,5}>0,$ 
\begin{eqnarray*}
&&\mathbb{P}\left\{ \sup_{x,y\in D( z,r_{k}) }\left\vert
T_{0}^{L_{k},U_{k}}( x) -T_{0}^{L_{k},U_{k}}( y) \right\vert \leq
C_{5,5}\rho _{\alpha }\left( \frac{r_{k}}{\sqrt{\log \log r_{k}^{-1}}}%
\right) \right\} \\
&\geq & \exp \left( -\frac{2K_{2,8}}{C_{5,5}}\log k\right) =k^{-1/2},
\end{eqnarray*}
where we have taken $C_{5,5}=4\log eK_{2,8}.$

On the other hand, recall $T_{0}^{\Delta _{k}}=T_{0}-T_{0}^{L_{k},U_{k}}.$ 
\textit{Let } $B\left( r_{k}\right) ^{\kappa _{1}\left( 4-\alpha \right)}=(
\log \log r_{k}^{-1}) ^{\alpha /2},$ then it is readily seen that for any $%
k\in \mathbb{N}^{+},$ 
\begin{equation*}
\rho _{\alpha }\left( r_{k}/\sqrt{\log \log r_{k}^{-1}}\right) > (B( r_{k})
) ^{-\kappa _{1}( 2-\alpha /2) }\sqrt{\log B( r_{k}) }r_{k}^{( \alpha -2)
/2},
\end{equation*}
and thus, by Lemma \ref{LemTtail}, we have for any constant $C_{5,6}>0,$ 
\begin{eqnarray*}
&&\mathbb{P}\left\{ \sup_{x,y\in D( z,r_{k}) }\left\vert T_{0}^{\Delta
_{k}}( x) -T_{0}^{\Delta _{k}}( y)\right\vert >C_{5,6}\rho _{\alpha }\left(
r_{k}/\sqrt{\log \log r_{k}^{-1}}\right) \right\} \\
&\leq & \exp \left( -\frac{C_{5,6}^{2}}{K_{5,2}}\frac{\left(
B\left(r_{k}\right) \right) ^{\kappa _{1}\left( 4-\alpha \right) }}{\left(
\log\log r_{k}^{-1}\right) ^{\alpha /2-1}}\right) \leq \exp \left( -\frac{%
C_{5,6}^{2}}{K_{5,2}}\log \log r_{k}^{-1}\right) \\
&\leq &\exp \left( -\frac{C_{5,6}^{2}}{K_{5,2}}\log k\right) =k^{-2},
\end{eqnarray*}
where we have chosen $C_{5,6}^{2}\log e=2K_{5,3}$. \textit{\ Therefore, we
have } 
\begin{equation*}
\dsum\limits_{k=1}^{\infty }\mathbb{P}\left\{ \sup_{x,y\in D(z,r_{k})
}\left\vert T_{0}^{\Delta _{k}}( x) -T_{0}^{\Delta_{k}}( y) \right\vert >
C_{5,6}\rho _{\alpha }\left( \frac{r_{k}}{\sqrt{\log \log r_{k}^{-1}}}%
\right) \right\} <\infty ,
\end{equation*}
and again by the Borel-Cantelli Lemma, we have 
\begin{equation}  \label{UTtail}
\limsup_{r\to0}\sup_{x,y\in D(z,r_{k}) }\left\vert T_{0}^{\Delta _{k}}( x)
-T_{0}^{\Delta_{k}}( y) \right\vert \leq C_{5,6}\rho _{\alpha }\left( \frac{%
r_{k}}{\sqrt{\log \log r_{k}^{-1}}}\right) ,\ a.s.
\end{equation}
The inequality $\left( \ref{ineq:Ulocmod}\right) $ is then derived in view
of $( \ref{UTmain}) $ and $( \ref{UTtail}) .$
\end{proof}

\section{Appendix}

\label{appendix}

\begin{lemma}
\label{C1'} There exists a positive constant $K_{A}$ depending on $K_{0}$
and $\alpha $, such that for any $\theta >0$ small, and positive integers $L<%
\frac{K_{A}}{\theta },$ we have 
\begin{equation*}
\sum_{\ell =1}^{L}\frac{2\ell +1}{4\pi }C_{\ell }\left\{ 1-P_{\ell
}(\cos\theta )\right\} \leq K_{A}L^{4-\alpha }\theta ^{2}\text{ ,}
\end{equation*}
and, for any $U>1,$ 
\begin{equation*}
\sum_{\ell =U}^{\infty }\frac{2\ell +1}{4\pi }C_{\ell }\left\{
1-P_{\ell}(\cos \theta )\right\} \leq K_{A}U^{2-\alpha }.
\end{equation*}
\end{lemma}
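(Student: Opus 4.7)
The proof rests on two pointwise bounds for $1-P_\ell(\cos\theta)$. The refined one is
$$0 \le 1 - P_\ell(\cos\theta) \le \frac{\ell(\ell+1)}{4}\theta^2,$$
which follows from $P_\ell(1)=1$ and the fundamental theorem of calculus together with the classical fact that $|P_\ell'(x)|\le \ell(\ell+1)/2$ for $x\in[-1,1]$ (the maximum being attained at $x=1$, see e.g.\ Szeg\H{o}'s book on orthogonal polynomials), combined with $1-\cos\theta \le \theta^2/2$. The coarse bound is the trivial one $|1-P_\ell(\cos\theta)| \le 2$ coming from $|P_\ell|\le 1$ on $[-1,1]$. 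With these, no further information about Legendre polynomials is required.

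For the first claimed inequality I would insert the refined bound together with $C_\ell \le K_0\ell^{-\alpha}$ from Condition (A) to obtain
$$\sum_{\ell=1}^L \frac{2\ell+1}{4\pi} C_\ell\{1-P_\ell(\cos\theta)\} \le C\theta^2 \sum_{\ell=1}^L \ell^{3-\alpha},$$
with $C$ depending only on $K_0$ and $\alpha$. Since $2<\alpha<4$ gives $-1<3-\alpha<1$, a standard integral comparison yields $\sum_{\ell=1}^L \ell^{3-\alpha} \le C' L^{4-\alpha}$, where for $\alpha$ close to $4$ the factor $L^{4-\alpha}\ge 1$ absorbs the constant term from small $\ell$, and for $\alpha$ close to $2$ the integral comparison is direct. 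Choosing $K_A$ to dominate $CC'$ completes this part. Note that the hypothesis $L<K_A/\theta$ is not actually used for the inequality itself; it only delineates the regime in which the bound is informative (the right-hand side stays $O(\theta^{\alpha-2})$).

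For the second claimed inequality I would use the trivial pointwise bound and again $C_\ell\le K_0\ell^{-\alpha}$:
$$\sum_{\ell=U}^\infty \frac{2\ell+1}{4\pi} C_\ell\{1-P_\ell(\cos\theta)\} \le C\sum_{\ell=U}^\infty \ell^{1-\alpha} \le C'' U^{2-\alpha},$$
where convergence of the tail and the rate $U^{2-\alpha}$ both follow from $\alpha>2$ via integral comparison. Enlarging $K_A$ if necessary to absorb $C, C', C''$ yields the stated bound.

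There is really no serious obstacle: the only mildly subtle step is isolating the correct pointwise estimate on $1-P_\ell(\cos\theta)$ — specifically the fact that $P_\ell'$ attains its sup-norm on $[-1,1]$ at the endpoint, giving the clean prefactor $\ell(\ell+1)/4$. After that the argument is purely a matter of bounding two $p$-series, and the conditions $\alpha>2$ (for the tail) and $\alpha<4$ (for the head, ensuring $3-\alpha>-1$ so the partial sum grows like $L^{4-\alpha}$) are used in exactly the places one would expect.
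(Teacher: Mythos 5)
Your proof is correct and reaches the same two estimates, but by a genuinely different and more elementary route than the paper. The paper's proof invokes Hilb's asymptotic formula (Szeg\H{o}, Theorem 8.21.6), writing $P_\ell(\cos\theta)=\{\theta/\sin\theta\}^{1/2}J_0((\ell+\tfrac12)\theta)+\delta_\ell(\theta)$ and then combining a Taylor expansion of $J_0$ near the origin with the error estimate on $\delta_\ell(\theta)$ (which is split according to whether $\theta<K_A/\ell$ or $\theta>K_A/\ell$ --- that is where the hypothesis $L<K_A/\theta$ enters for them, ensuring the small-$\theta$ branch applies for all $\ell\le L$). You instead use the mean value theorem on $[\cos\theta,1]$ together with the Markov-type bound $\max_{[-1,1]}|P_\ell'|=P_\ell'(1)=\ell(\ell+1)/2$ (Szeg\H{o}, Theorem 7.33.1 for ultraspherical polynomials, recalling $P_\ell'=C_{\ell-1}^{(3/2)}$ up to normalization) and $1-\cos\theta\le\theta^2/2$, yielding the clean pointwise estimate $1-P_\ell(\cos\theta)\le\tfrac{1}{4}\ell(\ell+1)\theta^2$. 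From there both proofs reduce to the same two $p$-series comparisons, with $\alpha<4$ making the head sum grow like $L^{4-\alpha}$ and $\alpha>2$ making the tail decay like $U^{2-\alpha}$. Your approach avoids the Bessel-function asymptotics entirely and, as you correctly observe, does not actually need the hypothesis $L<K_A/\theta$: the mean-value bound is uniform in $\ell$, whereas the paper's two-regime expansion of $\delta_\ell(\theta)$ is what imposes that restriction in their write-up. The trade-off is that Hilb's formula gives sharper two-sided control on $1-P_\ell(\cos\theta)$ in the transition regime $\ell\theta\asymp 1$, which is sometimes needed elsewhere in this literature, but for the one-sided upper bounds claimed here your argument is simpler and fully sufficient.
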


\begin{proof}
Recall first that, from Condition (A), there exists a positive constant $%
K_{0}$ such that for all $\ell =1,2,...$ 
\begin{equation*}
K_{0}^{-1}\ell ^{-\alpha +1}\leq \frac{2\ell +1}{4\pi }C_{\ell } \leq
K_{0}\ell ^{-\alpha +1}.
\end{equation*}
We recall also the following\ Hilb's asymptotics results (see \cite{szego}%
,page 195, Theorem 8.21.6): for $K_{A}>0,$ we have uniformly 
\begin{equation*}
P_{\ell }(\cos \theta ) =\left\{ \frac{\theta }{\sin \theta }%
\right\}^{1/2}J_{0}((\ell +\frac{1}{2})\theta )+\delta _{\ell }(\theta ),
\end{equation*}
where 
\begin{equation*}
\delta _{\ell }(\theta )<<\left\{ 
\begin{array}{cl}
\theta ^{2}O(1) & \text{ for }0<\theta <\frac{K_{A}}{\ell } \\ 
\theta ^{1/2}O(\ell ^{-3/2}) & \text{ for }\theta >\frac{K_{A}}{\ell }%
\end{array}
\right. ;
\end{equation*}
Moreover, 
\begin{equation*}
\lim_{u\rightarrow 0}\frac{\left\{ 1-J_{0}(K_{A}u)\right\} }{K_{A}^{2}u^{2}}=%
\frac{1}{2}\text{.}
\end{equation*}
Thus, by using the fact that 
\begin{equation*}
\frac{\theta }{\sin \theta }-1=\frac{\theta ^{2}}{6}+O(\theta ^{3})\text{
,as }\theta \rightarrow 0\text{,}
\end{equation*}
we obtain that 
\begin{eqnarray*}
\sum_{\ell =1}^{L}\frac{2\ell +1}{4\pi }C_{\ell }\left\{ 1-P_{\ell
}(\cos\theta )\right\} &\leq & K_{0}\sum_{\ell =1}^{L}\ell ^{1-\alpha
}\left( \frac{\ell ^{2}}{K_{A}^{2}}-\frac{1}{6}\right) \theta ^{2} \\
&\leq & \frac{K_{0}}{\left( 4-\alpha \right) K_{A}^{2}}L^{4-\alpha
}\theta^{2}.
\end{eqnarray*}
On the other hand, recall that $1-P_{\ell }(\cos \theta )\leq 2$ uniformly
for all $\theta ,$ whence we have, for any $U>1,$ 
\begin{equation*}
\sum_{\ell =U}^{\infty }\frac{2\ell +1}{4\pi }C_{\ell }\left\{
1-P_{\ell}(\cos \theta )\right\} \leq K_{1,0}\sum_{\ell =U}^{\infty }\ell
^{1-\alpha}\leq \frac{2K_{1,0}}{\alpha -2}U^{2-\alpha }.
\end{equation*}
Let $K_{A}=\max \left\{ \sqrt[3]{\frac{K_{0}}{\left( 4-\alpha \right) }},%
\frac{2K_{0}}{\alpha -2}\right\} ,$ the proof is then completed.
\end{proof}

\bigskip

\end{document}